\documentclass[reqno,11pt,a4paper,final]{amsart}
\usepackage[a4paper,left=40mm,right=40mm,top=35mm,bottom=35mm,marginpar=25mm]{geometry} 

\usepackage{amsmath}
\usepackage{amssymb}
\usepackage{amsthm}

\usepackage{amscd}
\usepackage[ansinew]{inputenc}
\usepackage{bbm}
\usepackage{color}
\usepackage[english=american]{csquotes}
\usepackage{hyperref}
\usepackage{calc}
\usepackage{amsmath}
\usepackage{amssymb}
\usepackage{amsthm}
\usepackage{amscd}
\usepackage{color}
\usepackage[english=american]{csquotes}
\usepackage{calc}
\usepackage{bm}
\usepackage{dsfont}
\usepackage{enumerate}
\usepackage{mathtools}
\usepackage[dvipsnames]{xcolor}
\usepackage{enumerate}



\numberwithin{equation}{section}

\newtheoremstyle{thmlemcorr}{10pt}{10pt}{\itshape}{}{\bfseries}{.}{10pt}{{\thmname{#1}\thmnumber{ #2}\thmnote{ (#3)}}}
\newtheoremstyle{thmlemcorr*}{10pt}{10pt}{\itshape}{}{\bfseries}{.}\newline{{\thmname{#1}\thmnumber{ #2}\thmnote{ (#3)}}}
\newtheoremstyle{defi}{10pt}{10pt}{\itshape}{}{\bfseries}{.}{10pt}{{\thmname{#1}\thmnumber{ #2}\thmnote{ (#3)}}}
\newtheoremstyle{remexample}{10pt}{10pt}{}{}{\bfseries}{.}{10pt}{{\thmname{#1}\thmnumber{ #2}\thmnote{ (#3)}}}
\newtheoremstyle{ass}{10pt}{10pt}{}{}{\bfseries}{.}{10pt}{{\thmname{#1}\thmnumber{ #2}\thmnote{ (#3)}}}

\theoremstyle{thmlemcorr}
\newtheorem{theorem}{Theorem}
\numberwithin{theorem}{section}
\newtheorem{lemma}[theorem]{Lemma}
\newtheorem{corollary}[theorem]{Corollary}
\newtheorem{proposition}[theorem]{Proposition}

\newtheorem{conjecture}[theorem]{Conjecture}

\theoremstyle{thmlemcorr*}
\newtheorem{theorem*}{Theorem}
\newtheorem{lemma*}[theorem]{Lemma}
\newtheorem{corollary*}[theorem]{Corollary}
\newtheorem{proposition*}[theorem]{Proposition}
\newtheorem{problem*}[theorem]{Problem}
\newtheorem{conjecture*}[theorem]{Conjecture}

\theoremstyle{defi}
\newtheorem{definition}[theorem]{Definition}

\theoremstyle{remexample}

\theoremstyle{ass}

\newcommand{\Crm}{\mathrm{C}}

\newcommand{\Lrm}{\mathrm{L}}

\newcommand{\Wrm}{\mathrm{W}}

\newcommand{\Acal}{\mathcal{A}}

\newcommand{\Fcal}{\mathcal{F}}

\newcommand{\Hcal}{\mathcal{H}}
\newcommand{\Ical}{\mathcal{I}}

\newcommand{\Mcal}{\mathcal{M}}
\newcommand{\Ncal}{\mathcal{N}}

\newcommand{\Abb}{\mathbb{A}}
\renewcommand{\Bbb}{\mathbb{B}}

\newcommand{\Sbb}{\mathbb{S}}

\DeclareMathOperator{\curl}{curl}

\DeclareMathOperator{\rank}{rank}

\newcommand{\setn}[2]{\{\, #1 \ \ \textup{\textbf{:}}\ \ #2 \,\}}
\newcommand{\setb}[2]{\bigl\{\, #1 \ \ \textup{\textbf{:}}\ \ #2 \,\bigr\}}

\newcommand{\setBB}[2]{\biggl\{\, #1 \ \ \textup{\textbf{:}}\ \ #2 \,\biggr\}}

\newcommand{\abs}[1]{|#1|}

\newcommand{\absb}[1]{\bigl|#1\bigr|}

\newcommand{\absBB}[1]{\biggl|#1\biggr|}

\newcommand{\di}{\mathrm{d}}
\newcommand{\dd}{\;\mathrm{d}}

\newcommand{\N}{\mathbb{N}}
\newcommand{\R}{\mathbb{R}}

\newcommand{\loc}{\mathrm{loc}}

\newcommand{\toweakstar}{\overset{*}\rightharpoonup}

\newcommand{\todown}{\downarrow}

\newcommand{\conv}{\!\star\!}

\newcommand{\BV}{\mathrm{BV}}
\newcommand{\BD}{\mathrm{BD}}

\newcommand{\Gr}{\mathrm{Gr}}

\newcommand{\term}[1]{\emph{#1}}

\def\Xint#1{\mathchoice 
{\XXint\displaystyle\textstyle{#1}}%
{\XXint\textstyle\scriptstyle{#1}}%
{\XXint\scriptstyle\scriptscriptstyle{#1}}%
{\XXint\scriptscriptstyle\scriptscriptstyle{#1}}%
\!\int} 
\def\XXint#1#2#3{{\setbox0=\hbox{$#1{#2#3}{\int}$} 
\vcenter{\hbox{$#2#3$}}\kern-.5\wd0}} 
\def\dashint{\,\Xint-}

\newcommand{\restrict}{\begin{picture}(10,8)\put(2,0){\line(0,1){7}}\put(1.8,0){\line(1,0){7}}\end{picture}}

\newcommand{\eps}{\varepsilon}
\renewcommand{\phi}{\varphi}
\renewcommand{\hat}{\widehat}

%
%
%
%
%
%

%
\newcommand{\A}{\mathcal{A}}
\newcommand{\B}{\mathcal{B}}
\newcommand{\bA}{\mathbb{A}}
\newcommand{\LL}{\mathcal{L}}
\newcommand{\HH}{\mathcal{H}}

\renewcommand{\L}{\mathrm{L}}

\newcommand{\I}{\mathcal{I}}
\newcommand{\p}{\mathcal{P}}

\newcommand{\bB}{\mathbb{B}}

\DeclareMathOperator{\Div}{div}
\DeclareMathOperator{\spt}{spt}

\def\d{{\rm  d}}
\def \p{\boldsymbol{p}}
\DeclareMathOperator{\Id}{\mathrm{Id}}
\def\BV{\textrm{BV}}
\def\BD{\textrm{BD}}

\newcommand{\res}{\restrict}

\title[Dimensional estimates and rectifiability]{Dimensional estimates and rectifiability for measures satisfying linear PDE constraints}

\author[A.~Arroyo-Rabasa]{Adolfo Arroyo-Rabasa}
\address{A.A.-R.: Mathematics Institute, University of Warwick, Coventry CV4 7AL, UK.}
\email{Adolfo.Arroyo-Rabasa@warwick.ac.uk}

\author[G.~De~Philippis]{Guido De Philippis}
\address{G.D.P.: SISSA, Via Bonomea 265, 34136 Trieste, Italy}
\email{guido.dephilippis@sissa.it}

\author[J.~Hirsch]{Jonas Hirsch}
\address{J.H.:  Mathematisches Institut, Universit\"at Leipzig, Augustus Platz 10, D04109 Leipzig, Germany}
\email{hirsch@math.uni-leipzig.de}

\author[F.~Rindler]{Filip Rindler}
\address{F.R.: Mathematics Institute, University of Warwick, Coventry CV4 7AL, UK, and The Alan Turing Institute, British Library, 96 Euston Road, London NW1 2DB London, UK.}
\email{F.Rindler@warwick.ac.uk}

\begin{document}

\begin{abstract}
We establish the rectifiability of measures satisfying a linear PDE constraint. The obtained rectifiability dimensions are optimal for many usual PDE operators, including all first-order systems and all second-order scalar operators. In particular, our general theorem provides a new proof of the rectifiability results for functions of bounded variations (BV) and functions of bounded deformation (BD). For divergence-free tensors we obtain refinements and new proofs of several known results on the rectifiability of varifolds and defect measures.

\vspace{4pt}

%

\noindent\textsc{Keywords:} Rectifiability, dimensional estimate, $\mathcal{A}$-free measure, PDE constraint.

\vspace{4pt}

\noindent\textsc{Date:} \today{}.
\end{abstract}

\maketitle

\section{Introduction}

Let \(\A\) be a    $k^{\text{\tiny th}}$-order linear constant-coefficient PDE operator acting  on \(\mathbb R^m\)-valued functions on $\R^d$ via
\[
  \A \varphi := \sum_{|\alpha|\le k}A_{\alpha} \partial^\alpha \phi\qquad\textrm{for all \(\varphi\in \Crm^\infty(\R^d;\mathbb R^m)\),}
\]
where  $A_{\alpha}\in \mathbb R^{n}\otimes \R^m $ ($\cong \R^{n \times m}$) are (constant) matrices, \(\alpha=(\alpha_1,\dots,\alpha_d)\in (\mathbb N \cup \{0\})^d\) is a multi-index and   $\partial^\alpha:=\partial_1^{\alpha_1}\ldots\partial_d^{\alpha_d}$. We also assume that at least one $A_\alpha$ with $\abs{\alpha} = k$ is non-zero.

An \(\R^m\)-valued Radon measure  \(\mu \in \mathcal M(U;\R^m)\) defined on an open set \(U\subset \R^d \)  is said to be \term{\(\A\)-free} if 
\begin{equation}\label{e:operator A}
\A \mu=0   \qquad\text{in the sense of distributions on $U$.}
\end{equation}
The Lebesgue--Radon--Nikod\'{y}m theorem implies that 
\[
\mu=g\LL^d+ \frac{\d\mu}{\d|\mu|}|\mu|^s,
\]
where \(g\in \L^1(U;\R^m)\),   \(|\mu|^s\) is the singular part of  the total variation measure \(|\mu|\) with respect to the $d$-dimensional Lebesgue measure \(\LL^d\),  and  
\[
\frac{\d\mu}{\d|\mu|}(x):=\lim_{r\to 0} \frac{\mu(B_r(x))}{|\mu|(B_r(x))}
\]
is the \term{polar} of \(\mu\), which exists and belongs to \(\mathbb S^{m-1}\) for \(|\mu|\)-almost every \(x \in U\).

In~\cite{De-PhilippisRindler16} it was shown that for any \(\A\)-free measure there is a strong constraint on the directions of the polar at singular points:

\newpage

\begin{theorem}[{\cite[Theorem~1.1]{De-PhilippisRindler16}}]\label{DR}
Let \(U\subset \R^d\) be an open set, let \(\A\) be a $k^{\text{\tiny th}}$-order linear constant-coefficient differential operator as above, and let \(\mu \in \mathcal M(U;\R^m)\) be an \(\A\)-free Radon measure on $U$ with values in $\R^m$. Then,
\[
\frac{\d\mu}{\d|\mu|}(x)\in \Lambda_\A \qquad\textrm{for \(|\mu|^s\)-a.e.\ \(x\in U\),}
\]
where \(\Lambda_\A\) is the {\em wave cone} associated to \(\A\), namely
\begin{equation}\label{e:wc}
\Lambda_{\A}:=\bigcup_{\xi \in \R^d\setminus\{0\}}\ker \bA^k(\xi),\qquad \bA^k(\xi):=\sum_{|\alpha|=k} A_{\alpha} \xi^\alpha.
\end{equation}
\end{theorem}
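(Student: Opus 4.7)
My strategy is a contradiction argument via blow-up, reducing the problem to a Fourier-analytic rigidity statement on the tangent measure. I fix $x_0 \in U$ satisfying (i) the polar $P_0 := \frac{\d\mu}{\d|\mu|}(x_0)$ exists and lies in $\mathbb S^{m-1}$, and (ii) $\limsup_{r \downarrow 0} |\mu|(B_r(x_0))/r^d = +\infty$; both properties hold at $|\mu|^s$-a.e.\ point by the Besicovitch differentiation theorem. Suppose, for contradiction, that $P_0 \notin \Lambda_{\A}$; equivalently, the principal symbol satisfies the ellipticity condition $\bA^k(\xi)P_0 \ne 0$ for every $\xi \in \R^d \setminus \{0\}$.

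\emph{Step 1 (Blow-up and limit PDE).} I set $T_r(y) := (y-x_0)/r$, $c_r := |\mu|(B_r(x_0))$, and $\sigma_r := c_r^{-1}(T_r)_{\#} \mu$. A direct computation in distributions yields the rescaled PDE
\[
  \A_r \sigma_r \ := \ \sum_{|\alpha|\le k} r^{k-|\alpha|}\, A_\alpha\,\partial^\alpha \sigma_r \ = \ 0.
\]
Since $|\sigma_r|(B_1) = 1$, weak-$*$ compactness extracts a subsequential limit $\sigma \ne 0$. The lower-order factors $r^{k-|\alpha|}$ with $|\alpha| < k$ vanish in the limit, so $\sigma$ solves the purely principal PDE $\bA^k(D)\sigma = 0$. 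The existence of the polar at $x_0$ forces the factorisation $\sigma = P_0\,\tau$ for some non-zero, non-negative Radon measure $\tau$ on $\R^d$, and hence $\bA^k(D)(P_0\,\tau) = 0$.

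\emph{Step 2 (Rigidity --- the main obstacle).} The heart of the proof is the claim: if $P_0 \notin \Lambda_{\A}$ and $\tau \ge 0$ is a Radon measure with $\bA^k(D)(P_0\,\tau) = 0$, then $\tau \ll \LL^d$ with locally bounded density. Formally, $\bA^k(\xi)P_0\,\hat\tau(\xi) = 0$ combined with injectivity of $\bA^k(\xi)P_0$ for $\xi \ne 0$ would give $\supp \hat\tau \subseteq \{0\}$, i.e.\ $\tau$ constant; the difficulty is to make this argument valid for a singular measure rather than a tempered distribution with polynomial growth. My plan is to construct a smooth, $0$-homogeneous left inverse $M(\xi)$ of $\bA^k(\xi)P_0$ off the origin, apply the associated singular-integral operator to a mollification of the PDE, and average over annuli, using the uniform mass bound $\tau(B_1)\le 1$ to pass to a locally uniform $\L^\infty$ estimate on $\tau$. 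This Fourier/pseudo-differential analysis is the main obstacle I anticipate, and it is precisely the key technical contribution of \cite{De-PhilippisRindler16}.

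\emph{Step 3 (Contradiction).} With Step 2 in hand, $\sigma$ is absolutely continuous with respect to $\LL^d$ with locally bounded density. At any Lebesgue point of $\tau$ of positive density, a further blow-up of $\sigma$ produces a tangent measure of the form $c P_0\,\LL^d$ with $c > 0$; by Preiss's stability theorem (tangent measures to tangent measures are themselves tangent measures), this is also a tangent measure of $\mu$ at $x_0$. The existence of such a ``flat'' tangent, however, is incompatible with the blow-up property~(ii) via a standard Marstrand/Preiss-type density argument. This contradiction establishes $P_0 \in \Lambda_{\A}$ at $|\mu|^s$-a.e.\ $x_0$, as claimed.
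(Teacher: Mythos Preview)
Your Step~3 has a genuine gap: having $cP_0\LL^d$ as a Preiss tangent at $x_0$ does \emph{not} contradict $\limsup_{r\to 0}|\mu|(B_r(x_0))/r^d=+\infty$. Preiss tangents allow an arbitrary positive normalisation $c_j$, so a flat tangent merely says that along some scales $r_j$, with some $c_j>0$, the rescalings $c_j(T^{x_0,r_j})_\#\mu$ converge to Lebesgue; nothing forces $c_j$ to be comparable to $r_j^{-d}$, and it will not be at singular points. The tangents-to-tangents theorem only enlarges $\Tan(\mu,x_0)$ and returns no density information on $\mu$; there is no ``standard Marstrand/Preiss density argument'' that closes this. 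Incidentally, your Step~2 is the easy part, not the hard one: since $\phi\mapsto\bA^k(D)(P_0\phi)$ is overdetermined elliptic (injective symbol for $\xi\neq 0$), plain interior regularity already makes any distributional solution $\tau$ smooth. So the difficulty you locate in Step~2 is misplaced, and the real obstruction surfaces in Step~3.

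The paper (recovering Theorem~\ref{DR} as the case $\ell=d$ of Theorem~\ref{t:unsatisfactory}) never argues from the weak-$*$ limit alone. The key Lemma~\ref{l:strong constancy lemma} shows that the blow-up \emph{sequence} $|\nu_j|$ converges \emph{strongly}, in local total variation, to some $\theta\LL^d$. Since the blow-ups are concentrated on the rescaled singular sets $F_j=T^{x_0,r_j}(F)$, each of Lebesgue measure zero, strong convergence gives
\[
|\nu_j|(B_{1/2}\cap F_j)\ \le\ \int_{B_{1/2}\cap F_j}\theta\,\d\LL^d\ +\ \bigl|\,|\nu_j|-\theta\LL^d\,\bigr|(B_{1/2})\ \longrightarrow\ 0,
\]
contradicting $\sigma(B_{1/2})>0$. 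Weak-$*$ convergence cannot control mass on the \emph{moving} null sets $F_j$; the entire purpose of the pseudodifferential machinery together with the positivity trick (Lemma~\ref{lem:conv_trick}) is to upgrade weak-$*$ to strong convergence, and \emph{that} upgrade---not rigidity of the limit tangent---is the actual technical core of~\cite{De-PhilippisRindler16}.
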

 
It has been shown in~\cite{De-PhilippisRindler16}, see also~\cite{De-Philippis16,De-PhilippisRindler17a} for recent surveys and~\cite[Chapter~10]{Rindler18book} for further explanation, that by suitably choosing the operator \(\A\), the study of the singular part of \(\A\)-free measures has several consequences in the calculus of variations and in geometric measure theory. In particular, we recall the following:
\begin{itemize}
\item If \(\A={\rm curl}\), the above theorem gives a new proof of {Alberti}'s rank-one theorem~\cite{Alberti93} (see also~\cite{MassaccesiVittone16} for a different proof based on a geometrical argument).
\item If \(\A=\Div\), combining Theorem~\ref{DR} with the result of~\cite{AlbertiMarchese16}, one obtains the weak converse of Rademacher's theorem (see~\cite{De-PhilippisMarcheseRindler17,GigliPasqualetto16,KellMondino16} for other consequences in metric geometry).
\end{itemize}

The main results of this paper is to show how Theorem \ref{DR}  can be improved by further constraining the direction of the polars on ``lower dimensional parts'' of the measure \(\mu\) and to establish some   consequences of this fact concerning dimensional estimates and rectifiability of  \(\A\)-free measures.
 To this end let us define a hierarchy of wave cones as follows:

\begin{definition}[$\ell$-wave cone] Let \(\Gr(\ell,d)\) be the Grassmannian of \(\ell\)-planes in \(\R^d\).
For $\ell =1,\ldots,d$ we define the \emph{$\ell$-dimensional wave cone} as 
\[
	\Lambda^\ell_\A:=\bigcap_{\pi \in \Gr(\ell,d)} \bigcup_{\substack{\xi \in \pi\setminus\{0\} }} \ker\bA^k(\xi),
\]
where \(\bA^k(\xi)\) is defined as in~\eqref{e:wc}.	
\end{definition}

Equivalently, $\Lambda^\ell_{\A}$ can be defined by the following analytical property:
\[
\text{$\lambda \notin \Lambda^\ell_\A \quad \Longleftrightarrow \quad (\A\restrict \pi) \lambda$ is elliptic for some $\pi \in \Gr(\ell,d)$,} 
\] 
where $(\A \restrict \pi)$ is the partial differential operator
 \[
 \Crm^\infty(\pi;\R^m)\ni\phi \mapsto (\A \restrict \pi)(\phi):= \A(\phi \circ \p_{\pi}), 
 \]
with \(\p_{\pi}\) the orthogonal projection onto \(\pi\). 

Note that, by the very definition of  \(\Lambda^{\ell}_\A\), we have the following inclusions:
\begin{equation}\label{e:inc}
\Lambda^{1}_{\A}=\bigcap_{\xi\in \R^d\setminus \{0\}} \ker \bA^{k}(\xi)\subset\Lambda_{\A}^j \subset\Lambda_{\A}^\ell\subset \Lambda_{\A}^d=\Lambda_{\A},\qquad 1\le j\le \ell\le d. 
\end{equation}

To state our main theorem, we also recall the definition of the integral-geometric measure, see~\cite[Section~5.14]{Mattila95}: Let \(\ell \in \{0,\ldots,d\}\). For a Borel  set \(E\subset \R^d\), the \term{\(\ell\)-dimensional integral-geometric (outer) measure} is 
\[
\mathcal I^\ell(E):=\int_{\Gr(\ell,d)} \int_{\pi}\HH^{0}(E\cap \p_{\pi}^{-1}(x)) \dd \HH^\ell(x) \dd \gamma_{\ell,d}(\pi),
\]
where \(\gamma_{\ell,d}\) is the {unique} \(\mathrm{O}(d)\)-invariant probability measure on \(\Gr(\ell,d)\) and $\HH^\ell$ is the $\ell$-dimensional Hausdorff measure (normalized as in~\cite{Mattila95} such that $\HH^\ell(B^\ell_1) = 2^\ell$, where $B^\ell_1$ is the $\ell$-dimensional unit ball).

Our main result establishes that the polar of an \(\A\)-free measure is constrained to lie in a smaller cone on \(\I^\ell\)-null sets: 

\begin{theorem}\label{t:unsatisfactory}
	Let $U \subset \R^d$ be open, let $\A$ be as in~\eqref{e:operator A}, and let $\mu \in \mathcal{M}(U;\R^m)$ be an $\A$-free
	 measure on $U$. If  \(E \subset U\) is a Borel set with \(\mathcal I^{\ell}(E) = 0\) for some \(\ell \in \{0,\ldots,d\}\), then 
\[
		\frac{\d \mu}{\d|\mu|}(x) \in \Lambda^\ell_\A\qquad\mbox{for \(|\mu|\)-a.e. \(x\in E\)}.
\]
\end{theorem}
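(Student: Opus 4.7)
The plan is to argue by contradiction: suppose there is a Borel set $F \subset E$ with $|\mu|(F) > 0$ on which $p(x) := \frac{\d\mu}{\d|\mu|}(x) \notin \Lambda_\A^\ell$. First, I would reduce to a uniform setting. The set $\{(\lambda,\pi) \in \R^m\times\Gr(\ell,d) : \lambda \notin \Lambda_{\A\restrict\pi}\}$ is open, and the hypothesis $\I^\ell(E)=0$ directly implies that $G := \{\pi \in \Gr(\ell,d) : \HH^\ell(\p_\pi(E)) = 0\}$ has full $\gamma_{\ell,d}$-measure. A Fubini argument on $(|\mu|\restrict F)\otimes\gamma_{\ell,d}$, followed by shrinking to control the image of the polar, then yields a fixed $\pi \in G$, a vector $\lambda_0 \in \R^m\setminus \Lambda_{\A\restrict\pi}$, and a subset $F_0 \subset F$ with $|\mu|(F_0) > 0$ on which $p(x)$ lies in an arbitrarily small neighbourhood of $\lambda_0$.

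The core step is a blow-up at a $|\mu|$-density point $x_0 \in F_0$. After normalized rescaling and passing to a subsequence, $\mu$ converges in the weak-$*$ sense to a tangent measure $\lambda_0 \sigma$, where $\sigma$ is a non-negative Radon measure annihilated by the principal (top-order) part of $\A$ and the polar is the constant $\lambda_0$ by the density assumption. Taking Fourier transforms yields $\bA^k(\xi)\lambda_0\,\hat\sigma(\xi) = 0$, and the ellipticity $\bA^k(\xi)\lambda_0 \neq 0$ on $\pi \setminus \{0\}$ forces $\hat\sigma$ to vanish on $\pi \setminus \{0\}$. Hence $\sigma$ is invariant under translations in $\pi$-directions, i.e.\ $\sigma = c\,\L^\ell|_\pi \otimes \tau$ for some non-negative Radon measure $\tau$ on $\pi^\perp$. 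Equivalently, one may observe that $\nu := (\p_\pi)_*\mu$ is $(\A \restrict \pi)$-free on $\pi$ (by testing $\A\mu = 0$ against $\chi\cdot(\phi\circ\p_\pi)$ with $\chi \equiv 1$ on $\supp\mu$ and using the identity $\A^*(\phi\circ\p_\pi) = ((\A\restrict\pi)^*\phi)\circ\p_\pi$), and then apply Theorem~\ref{DR} to $\nu$ with operator $(\A\restrict\pi)$: its polar is forced into $\Lambda_{\A\restrict\pi}$ at $|\nu|^s$-a.e.\ point, and the $\HH^\ell$-nullity of $\p_\pi(E)$ places $\nu\restrict\p_\pi(F_0)$ entirely in the singular part.

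From the translation invariance one extracts positive lower and finite upper $\ell$-dimensional densities for $|\mu|$ at $|\mu|$-a.e.\ $x \in F_0$; a Preiss--Mattila-type rectifiability theorem then upgrades $|\mu|\restrict F_0$ to an $\ell$-rectifiable measure. Because $\HH^\ell$ and $\I^\ell$ agree up to a dimensional constant on $\ell$-rectifiable sets, $\I^\ell(E) = 0$ forces $\HH^\ell(F_0) = 0$; combined with the upper density bound, this yields $|\mu|(F_0) = 0$, contradicting $|\mu|(F_0) > 0$. The main obstacle is precisely the rectifiability upgrade: the Fourier and translation-invariance argument alone only gives the product structure $\sigma = c\,\L^\ell|_\pi \otimes \tau$ with $\tau$ possibly non-atomic, whereas the $\I^\ell$-null hypothesis (as opposed to the stronger $\HH^\ell$-null one) requires iterating the blow-up or invoking a geometric rigidity result to force tangent measures to be flat multiples of $\HH^\ell$ on a single $\ell$-plane, thereby bridging the PDE constraint $p(x) \notin \Lambda_\A^\ell$ with $\I^\ell$-absolute continuity.
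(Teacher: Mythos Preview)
Your setup is sound and matches the paper: argue by contradiction, use $\I^\ell(E)=0$ to pick a plane $\pi$ with $\HH^\ell(\p_\pi(F))=0$ on which $\bA^k(\xi)\lambda_0\neq 0$ for $\xi\in\pi\setminus\{0\}$, and blow up at a good density point. The divergence comes after that.

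Your first route (tangent measure $\to$ translation invariance $\to$ density bounds $\to$ rectifiability $\to$ Besicovitch--Federer) does not close, and you correctly identify why: from $\sigma=c\,\LL^\ell|_\pi\otimes\tau$ with $\tau$ unconstrained you get no control on $\theta^*_\ell(|\mu|)$ or $\theta_{*,\ell}(|\mu|)$, so no rectifiability criterion applies. There is no known ``geometric rigidity'' that forces $\tau$ to be atomic in this generality, and the paper does \emph{not} attempt anything of the sort. In fact the logic in the paper runs the other way: rectifiability (Theorem~\ref{t:rect}) is deduced \emph{from} the present theorem via Besicovitch--Federer, not used to prove it.

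Your ``alternative'' idea---push forward to $\pi$ and invoke a De~Philippis--Rindler-type constraint there---is morally the paper's approach, but two points you gloss over are exactly where the work lies. First, $(\p_\pi)_\#(\chi\mu)$ is \emph{not} $(\A\restrict\pi)$-free: there is no cutoff $\chi$ with $\chi\equiv 1$ on $\supp\mu$ in general, and any compactly supported $\chi$ in the $\pi^\perp$-direction produces a commutator $[\A^*,\chi]$ of order $\le k-1$, so $\B\nu$ is only a lower-order defect, not zero. Second, the polar of $\nu$ at $y\in\pi$ is a fibre average of $\frac{\d\mu}{\d|\mu|}$ over $\p_\pi^{-1}(y)$ and need not be close to $\lambda_0$, since the fibre meets the part of $\mu$ outside $F_0$. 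The paper handles both issues simultaneously: one first blows up (so that asymptotically the polar is $\lambda_0$ everywhere, by density of $F_0$), then projects the rescaled measures $\mu_j$ to obtain $\nu_j:=\p_\#(\chi\mu_j)$, and proves a \emph{strong} $\Lrm^1$ compactness lemma (Lemma~\ref{l:strong constancy lemma}) showing $|\nu_j|\to\theta\,\LL^\ell$ in total variation on compact sets. The commutator terms and the inhomogeneous remainder of $\A$ enter only as an $\Lrm^1$-precompact defect (assumption~(a2)), which the lemma absorbs. The contradiction is then immediate and does not pass through rectifiability at all: since $\mu_j$ is asymptotically carried by $F_j$ with $\HH^\ell(\p_\pi(F_j))=0$, strong convergence to an absolutely continuous limit forces $\sigma(B_{1/2})=0$.

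In short: drop the rectifiability detour, keep the projection idea, and replace the black-box appeal to Theorem~\ref{DR} by the quantitative strong-convergence argument (ellipticity of $\bB(\xi)\lambda_0$, H\"ormander--Mihlin multiplier bounds, equiintegrability trick) that underlies its proof. That is the missing ingredient.
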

Note that, by taking \(\ell=d\),  Theorem~\ref{t:unsatisfactory} recovers Theorem~\ref{DR}. As a corollary we obtain the following dimensional estimates on \(\A\)-free measures; see also~\cite{Arroyo-Rabasa18} for a different proof of~\eqref{e:dim} in the case of first-order systems.

\begin{corollary}[dimensional estimate]\label{cor:aac}
Let \(\A\) and \(\mu\) be as in Theorem~\ref{t:unsatisfactory} and assume that \(\Lambda^\ell_{\A}=\{0\}\) for some \(\ell \in \{0,\ldots,d\}\).  Then,
\[
   \text{$E \subset U$ Borel with $\I^{\ell}(E)=0$}  \quad\Longrightarrow\quad
   |\mu|(E)=0.
\]
In particular,
\[
  \mu \ll \I^\ell\ll\HH^\ell
\]
and thus 
\begin{equation}\label{e:dim}
\dim_{\HH} \mu:=\sup \, \setb{ \ell }{ \mu\ll\HH^\ell } \ge \ell_\Acal,
\end{equation}
where
\begin{equation}\label{ell1}
  \ell_\Acal : =\max\setb{ \ell }{ \Lambda_{\A}^\ell=\{0\}}.
\end{equation}
\end{corollary}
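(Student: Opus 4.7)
The plan is to derive this dimensional estimate directly from Theorem \ref{t:unsatisfactory}, exploiting the hypothesis $\Lambda^\ell_\A=\{0\}$ against the unit-norm property of the Radon--Nikod\'ym derivative $\d\mu/\d|\mu|$, together with the classical comparison between $\I^\ell$ and $\HH^\ell$.

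First, I would fix a Borel set $E\subset U$ with $\I^\ell(E)=0$ and apply Theorem \ref{t:unsatisfactory} to conclude that
\[
\frac{\d\mu}{\d|\mu|}(x)\in\Lambda^\ell_\A\qquad\text{for $|\mu|$-a.e.\ $x\in E$.}
\]
Under the assumption $\Lambda^\ell_\A=\{0\}$, this forces the polar to vanish $|\mu|$-a.e.\ on $E$. On the other hand, the Radon--Nikod\'ym derivative $\d\mu/\d|\mu|$ takes values in $\mathbb{S}^{m-1}$ for $|\mu|$-a.e.\ $x\in U$, and therefore cannot vanish on a set of positive $|\mu|$-measure. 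Combining these two facts yields $|\mu|(E)=0$, which is precisely the implication $\I^\ell(E)=0 \Rightarrow |\mu|(E)=0$, i.e.\ $|\mu|\ll\I^\ell$.

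Next, I would appeal to the classical estimate $\I^\ell(E)\le c(\ell,d)\HH^\ell(E)$, valid for every Borel set $E$ (see, e.g., \cite[Section~5.14]{Mattila95}), which immediately gives $\I^\ell\ll\HH^\ell$. Chaining the two absolute continuities yields $\mu\ll\I^\ell\ll\HH^\ell$. Finally, since $\ell_{\Acal}$ satisfies $\Lambda^{\ell_\Acal}_\A=\{0\}$ by \eqref{ell1}, applying the chain above with $\ell=\ell_\Acal$ and inserting into the definition $\dim_\HH\mu=\sup\{\ell:\mu\ll\HH^\ell\}$ yields the bound \eqref{e:dim}.

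Since all the substantial content is encoded in Theorem \ref{t:unsatisfactory}, there is no serious obstacle in this deduction; the argument is a clean translation of a directional constraint on the polar into a measure-theoretic absolute continuity. The only minor point to be careful about is the distinction between $\I^\ell$ and $\HH^\ell$, which is resolved by the standard one-sided comparison on Borel sets.
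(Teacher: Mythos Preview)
Your proof is correct and is exactly the deduction the paper intends: the corollary is stated without an explicit proof in the paper precisely because it follows immediately from Theorem~\ref{t:unsatisfactory} by the argument you give (polar forced into $\Lambda^\ell_\A=\{0\}$ contradicts $\d\mu/\d|\mu|\in\mathbb S^{m-1}$, together with the standard comparison $\I^\ell\le c\,\HH^\ell$).
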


The results above and~\eqref{e:inc} entail that the smaller the dimension of an \(\A\)-free  measure \(\mu\) is, the  more its polar is constrained at singular points. Let us also remark that the $1$-dimensional wave cone \(\Lambda^{1}_{\A}\) has been implicitly introduced by \textsc{van~Schaftigen} in~\cite{Van-Schaftingen13}. There, the author calls a (homogeneous) oparator $\A$ \emph{cocanceling} provided that $\Lambda_\A^1 = \{0\}$. Moreover, it is shown that the cocanceling condition is equivalent to the property
	\[
		\A(\lambda \delta_0) = 0 \;\;\text{for some}\;\; \lambda \in \R^m \qquad \Longrightarrow \qquad \lambda = 0.
	\] 
Thus, the conclusion of Theorem~\ref{t:unsatisfactory} improves upon the dimensional estimates for $\A$-free measures with $\A$ cocanceling.

The use of the integral-geometric measure, besides being natural in the proof, allows one to use the Besicovitch--Federer rectifiability criterion to deduce the following rectifiability result. Recall that for a  positive measure \(\sigma\in \mathcal M_{+}(U)\)  the  \emph{upper $\ell$-dimensional  density} is defined as
	\[
		\theta^*_\ell(\sigma)(x) \coloneqq \limsup_{r \to 0} \frac{\sigma(B_r(x))}{(2r)^\ell} = \limsup_{r \to 0} \frac{\sigma(B_r(x))}{\HH^\ell(B^\ell_r)}, \qquad x \in U.
	\]  

\begin{theorem}[rectifiability]\label{t:rect}
Let \(\A\) and \(\mu\) be as in Theorem~\ref{t:unsatisfactory}, and assume that \(\Lambda^{\ell}_{\A}=\{0\}\). Then, the set $\{\theta^*_\ell(|\mu|) = +\infty\}$ is $|\mu|$-negligible. 
Moreover, \(\mu\res \{\theta^{*}_{\ell}(|\mu|)>0\}\) is concentrated on an \(\ell\)-rectifiable set \(R\) and
\[
\mu\res R=\theta^*_{\ell}(|\mu|)\lambda\,\HH^\ell\res R,
\]
where \(\lambda \colon R \to \mathbb S^{m-1}\) is $\HH^\ell$-measurable; for \(\HH^\ell\)-almost every \(x_0\in R\) (or, equivalently, for \(|\mu|\)-almost every \(x_0\in R\)),  
\begin{equation}\label{bu}
(2r)^{-\ell}(T^{x_0,r})_\#\mu\overset{*}{\rightharpoonup} \theta^{*}_{\ell}(|\mu|)(x_0) \lambda(x_0)\HH^\ell\res (T_{x_0} R)  \qquad
\text{as $r \todown 0$;}
\end{equation}
and
\begin{equation}\label{kernel}
\lambda(x_0)\in \bigcap_{\xi \in (T_{x_0} R)^\perp} \ker \bA^k(\xi).
\end{equation}
Here  $T^{x_0,r}(x):=(x-x_0)/r$ and \(T_{x_0} R\) is the the approximate tangent plane to \(R\) at \(x_0\).
\end{theorem}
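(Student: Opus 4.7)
The strategy is to combine the absolute-continuity bound \(|\mu|\ll\HH^\ell\) from Corollary~\ref{cor:aac} with the Besicovitch--Federer rectifiability criterion, using Theorem~\ref{t:unsatisfactory} to rule out the purely unrectifiable part, and then to read off the kernel condition~\eqref{kernel} from an \(\A\)-freeness-preserving blow-up. For the first assertion, the standard density comparison (e.g.\ \cite[Theorem~6.9]{Mattila95}) yields
\[
\HH^\ell\bigl(\{\theta^*_\ell(|\mu|)\ge t\}\bigr)\le C(d,\ell)\,t^{-1}\,|\mu|(U)\qquad \text{for every } t>0.
\]
Letting \(t\to\infty\) shows that both \(\HH^\ell\) and \(|\mu|\) vanish on \(\{\theta^*_\ell(|\mu|)=+\infty\}\).

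\emph{Rectifiability.} Decompose the positive finite-density set as \(E=\bigcup_n E_n\) with \(E_n:=\{1/n\le\theta^*_\ell(|\mu|)\le n\}\). The same density bound gives \(\HH^\ell(E_n)<\infty\) locally, so the Besicovitch--Federer theorem (\cite[Chapter~18]{Mattila95}) decomposes \(E_n=R_n\cup P_n\) with \(R_n\) countably \(\ell\)-rectifiable and \(P_n\) purely \(\ell\)-unrectifiable, in particular \(\I^\ell(P_n)=0\). Theorem~\ref{t:unsatisfactory} then forces \(\d\mu/\d|\mu|\in\Lambda^\ell_\A=\{0\}\) on \(P_n\); since the polar has unit modulus, this forces \(|\mu|(P_n)=0\), and \(R:=\bigcup_n R_n\) is the desired rectifiable carrier.

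\emph{Representation and blow-up.} Since \(|\mu|\res R\ll\HH^\ell\res R\) and the latter has density one at \(\HH^\ell\)-a.e.\ point of \(R\) (a classical property of rectifiable sets), Radon--Nikod\'ym together with Lebesgue differentiation produces \(\mu\res R=\theta^*_\ell(|\mu|)\,\lambda\,\HH^\ell\res R\), with \(\lambda:=\d\mu/\d|\mu|\). The weak* convergence~\eqref{bu} then follows from the classical tangent-measure statement \((2r)^{-\ell}(T^{x_0,r})_\#(\HH^\ell\res R)\overset{*}{\rightharpoonup}\HH^\ell\res T_{x_0}R\) at \(\HH^\ell\)-a.e.\ \(x_0\in R\), combined with the Lebesgue differentiation of the densities \(\theta^*_\ell(|\mu|)\) and \(\lambda\) on \(R\).

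\emph{Polar constraint on tangents.} At a point \(x_0\in R\) where~\eqref{bu} holds, set \(V:=T_{x_0}R\) and \(\nu:=\theta^*_\ell(|\mu|)(x_0)\lambda(x_0)\HH^\ell\res V\). A direct computation using \(\partial^\alpha(\varphi\circ T^{x_0,r})=r^{-|\alpha|}(\partial^\alpha\varphi)\circ T^{x_0,r}\) gives, for any test \(\varphi\in\Crm^\infty_c\),
\[
\bigl|\langle\bA^k(\partial)\,\nu_r,\varphi\rangle\bigr|\;\le\;C(\varphi)\sum_{|\alpha|<k}r^{k-|\alpha|}\,(2r)^{-\ell}|\mu|(B_{Rr}(x_0)),
\]
where \(\nu_r:=(2r)^{-\ell}(T^{x_0,r})_\#\mu\) and we used \(\A\mu=0\) to trade the top-order part \(\bA^k(\partial)\mu\) for lower-order terms. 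Finiteness of \(\theta^*_\ell(|\mu|)(x_0)\) bounds \((2r)^{-\ell}|\mu|(B_{Rr}(x_0))\) and the factors \(r^{k-|\alpha|}\to 0\), so \(\bA^k(\partial)\nu=0\) in the sense of tempered distributions. Since \(\widehat{\HH^\ell\res V}\) is supported on \(V^\perp\), the Fourier identity \(\bA^k(\xi)\hat\nu(\xi)=0\) forces \(\bA^k(\xi)\lambda(x_0)=0\) for every \(\xi\in V^\perp\), which is~\eqref{kernel}. The conceptual crux of the whole argument lies in the rectifiability step: only because Besicovitch--Federer produces a set of \emph{integral-geometric} (not merely Hausdorff) measure zero is the sharper constraint \(\Lambda^\ell_\A=\{0\}\) of Theorem~\ref{t:unsatisfactory} available, and it is exactly this improvement over Theorem~\ref{DR} that eliminates the purely unrectifiable piece.
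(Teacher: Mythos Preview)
Your proof is correct and follows essentially the same route as the paper's: the absolute continuity $|\mu|\ll\HH^\ell$ from Corollary~\ref{cor:aac} handles the infinite-density set, the Besicovitch--Federer decomposition of the (locally $\HH^\ell$-finite) positive-density set produces a purely unrectifiable piece with vanishing $\I^\ell$-measure, and Theorem~\ref{t:unsatisfactory} (equivalently Corollary~\ref{cor:aac}) kills that piece. The only organizational difference is that the paper packages the representation, blow-up~\eqref{bu}, and Fourier-transform kernel argument~\eqref{kernel} into a separate lemma (Lemma~\ref{lm:rec}), whereas you spell these out in line; the content is the same.
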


Theorem~\ref{t:rect}  contains the classical rectfiability result for the jump part of the gradient of a \({\rm BV}\) function, see~\cite{AmbrosioFuscoPallara00}, and the analogous  result for \(\BD\), see~\cite{Kohn79,AmbrosioCosciaDalMaso97}. By choosing \(\A=\Div\) we also recover and (in some cases slightly generalize) several known rectifiability criteria, such as {Allard's} rectifiability theorem for varifolds~\cite{Allard72},  its recent extensions to anisotropic energies~\cite{De-PhilippisDe-RosaGhiraldin18}, the rectifiability of generalized varifolds established in~\cite{AmbrosioSoner97}, and the rectifiability of various defect measures  in the spirit of~\cite{Lin99}, see also~\cite{Moser03}. We refer the reader to Section~\ref{sec:applications} for some of these statements.

It is worth noting that, with the exception of the $\BD$-rectifiability result in~\cite[Proposition~3.5]{AmbrosioCosciaDalMaso97}, none of the above rectifiability criteria rely on the Besicovitch--Federer theorem and their proofs are based on more standard blow-up techniques. However, in the generality of Theorem~\ref{t:rect} a blow-up proof seems hard to obtain. Indeed, roughly,  a  blow-up argument follows two steps:

\begin{itemize}
\item By some measure-theoretic arguments one shows that, up to a subsequence,
\[
r^{-\ell} T^{x_0,r}\mu\overset{*}{\rightharpoonup}\lambda \sigma
\]
for some positive measure \(\sigma\) and some fixed vector \(\lambda\).
\item One exploits this information together with the \(\A^k\)-freeness of \(\lambda \sigma\), where $\A^k$ is the principal part of $\A$, to deduce that \(\sigma\) is translation-invariant along the directions in an \(\ell\)-dimensional plane \(\pi\) and thus \(\sigma=\HH^{\ell}\res \pi\). In this step one usually uses that \(\pi\) is uniquely determined by \(\lambda\) and \(\A\).
\end{itemize} 
However, assuming that \(\sigma=\HH^{\ell}\res \pi\), the only information one can get is 
\[
\lambda \in \bigcap_{\xi \in \pi^\perp} \ker \bA^k(\xi),
\]
 see Lemma \ref{lm:rec}, and this does not uniquely determine \(\pi\) in general.

Let us now briefly discuss the optimality of our results. First note that \eqref{bu} and \eqref{kernel} are true whenever an \(\A\)-free measure \(\mu\)  has a non-trivial part concentrated on an \(\ell\)-rectifiable set \(R\), see Lemma \ref{lm:rec} below.

In particular, defining for $\ell = 0,\ldots,d-1$ the cone
\[
\mathcal{N}_{\A}^\ell:= \bigcup_{\pi \in \Gr(\ell,d)} \bigcap_{\xi \in \pi^\perp} \ker \bA^k(\xi)= \bigcup_{\tilde\pi \in \Gr(d-\ell,d)} \bigcap_{\xi \in \tilde{\pi}} \ker \bA^k(\xi),
\]
we have  that
\[
\Lambda_{\A}^{1}=\mathcal{N}_{\A}^0\subset \mathcal{N}_{\A}^\ell \subset \mathcal{N}_{\A}^j\subset  \mathcal{N}_{\A}^{d-1}=\Lambda_{\A},\qquad
0\le \ell\le j\le d-1,
\]
and 
\[
\mathcal{N}_{\A}^\ell\subset \Lambda_{\A}^{\ell+1}, \qquad 0 \le \ell  \le d -2.
\]
Hence, setting
\begin{equation}\label{ell2}
\ell_\Acal^*:=\min\,\setb{\ell }{ \mathcal{N}_{\A}^\ell \ne \{0\}},
\end{equation}
 the above discussion yields that if \(\mu\) has a non-trivial \(\ell\)-rectifiable part, then necessarily
\begin{equation*}
\ell\ge\ell_\Acal^*,
\end{equation*}
and this bound is sharp for homogeneous operators since if \(\lambda \in \bigcap_{\xi \in \pi^\perp} \ker \bA^k(\xi)\setminus\{0\}\) for some \(\ell\)-plane \(\pi\), then \(\lambda \HH^{\ell}\res \pi\) is an \(\A^k\)-free measure.

Recalling  the definition of $\ell_\Acal$ in~\eqref{ell1}, this discussion together with Corollary~\ref{cor:aac} and~\eqref{ell2} can then be summarized for homogeneous operators $\A$ as
\[
\ell_\Acal \le \min \, \setb{ \dim_{\HH} \mu }{ \textrm{\(\mu\) is \(\A\)-free} } \le \ell_\Acal^*.
\]

For first-order operators it is not hard to check that \(\ell_\Acal=\ell_\Acal^*\) (by the linearity of $\xi \mapsto \Abb^k(\xi)$). The same is true for second-order \emph{scalar} operators ($n=1$) by reducing the polynomial to canonical form (which makes $\Abb^k(\xi)$ linear in $\xi_1^2,\ldots,\xi_d^2$). Hence, the above inequality for such homogeneous operators becomes an equality and our theorem is sharp.

On the other hand, it is easy to build examples where \(\ell_\Acal<\ell_\Acal^*\). For instance, one can easily check that for the $3^\text{\tiny{rd}}$-order scalar operator defined on \(\Crm^{\infty}(\R^{3})\) by 
\[
  \A := \partial_{x_{1}}^3 + \partial_{x_{2}}^3 + \partial_{x_{3}}^3 
  \]
we have $\ell_\A = 1 < 2 = \ell^{*}_\A$ since its characteristic set $\setn{ \xi \in \R^3 }{ \xi_1^3 + \xi_2^3 + \xi_3^3=0 }$ is a ruled surface  (and hence it contains lines) but it does  not contain  planes. Moreover, let $\widetilde{\A}$ be the $6^\text{\tiny{th}}$-order operator acting on maps from $\R^3$ to $\R^2$ with symbol
\[
  \widetilde{\Abb}(\xi)\begin{pmatrix}w_1\\w_2\end{pmatrix} := (\xi_1^6+\xi_2^6+\xi_3^6)w_1 + (\xi_1^3 + \xi_2^3 + \xi_3^3)^2w_2,  \qquad \xi \in \R^3.
\]
For this operator we still have $\ell_{\widetilde{\A}} = 1 < 2 = \ell^{*}_{\widetilde{\A}}$, but $\widetilde{A}$ additionally satisfies Murat's constant rank condition~\cite{Murat81}.

Let us remark that in the case  \(\ell_\Acal<\ell_\Acal^*\), Theorem~\ref{t:rect} implies that if \(\mu\) is an \(\A\)-free measure, then 
\[
 \qquad|\mu|\big(\{\theta^{*}_{\ell_\Acal}(|\mu|)>0\})=0.
\] 
Hence  \(\mu\) is ``more diffuse'' than an \(\ell_\Acal\)-dimensional measure. Furthermore, $\mu$ cannot sit on rectifiable sets of any (integer) dimension $\ell \in [\ell_\A,\ell_\A^*)$. It seems thus reasonable to expect that its dimension should be larger than \(\ell_\Acal\).  In particular, one might conjecture the following improvement of Corollary~\ref{cor:aac}:

\begin{conjecture}
Let \(\mu\) be \(\A\)-free and let \(\ell_\Acal^*\) be the rectifiability dimension defined in~\eqref{ell2}. Then,
\[
\dim_{\HH} \mu\ge \ell_\Acal^*.
\]
\end{conjecture}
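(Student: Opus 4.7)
The plan is to argue by contradiction. Suppose that $\dim_{\HH}\mu < \ell_\Acal^*$; then there exist a nonnegative integer $\ell < \ell_\Acal^*$ and a Borel set $E \subset U$ with $\HH^\ell(E) = 0$ and $|\mu|(E) > 0$. Because $\I^\ell \le c_{\ell,d}\,\HH^\ell$ as outer measures, also $\I^\ell(E) = 0$, and Theorem~\ref{t:unsatisfactory} already forces
\[
  \frac{\d\mu}{\d|\mu|}(x) \in \Lambda_{\A}^\ell
  \qquad\text{for $|\mu|$-a.e.\ $x \in E$.}
\]
This first reduction is insufficient on its own: when $\ell$ falls in the gap $(\ell_\Acal,\ell_\Acal^*)$, the cone $\Lambda_{\A}^\ell$ may well be non-trivial, so the polar need not vanish on $E$.

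The strategy is then to upgrade this constraint on $E$ from $\Lambda_{\A}^\ell$ to the strictly smaller cone $\mathcal{N}_{\A}^\ell$, which equals $\{0\}$ by~\eqref{ell2} and would thus force $|\mu|(E) = 0$, giving the desired contradiction. A standard density comparison shows that $\HH^\ell(E) = 0$ together with $|\mu|(E) > 0$ implies $\theta^*_\ell(|\mu|)(x_0) = +\infty$ for $|\mu|$-a.e.\ $x_0 \in E$. At such a point, one would perform a blow-up
\[
  \nu = \wslim_{j \to \infty}\, c_j^{-1}\, (T^{x_0,r_j})_\# \mu
\]
for suitable $r_j \downarrow 0$ and $c_j > 0$; a scaling argument shows that $\nu$ is non-trivial and $\A^k$-free, where $\A^k$ is the principal part of $\A$, and the polar $\lambda := \frac{\d\mu}{\d|\mu|}(x_0)$ is inherited by $\nu$ at the origin. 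If one could prove that every such tangent measure is \emph{flat}, i.e.\ of the form $\theta\,\lambda\,\HH^\ell\res \pi$ for some $\ell$-plane $\pi \in \Gr(\ell,d)$, then Lemma~\ref{lm:rec} applied to the $\A^k$-free measure $\nu$ would give
\[
  \lambda \in \bigcap_{\xi \in \pi^\perp}\ker \bA^k(\xi) \subset \mathcal{N}_{\A}^\ell = \{0\},
\]
contradicting $|\lambda|=1$.

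The main obstacle is precisely this classification of tangent measures. In the gap regime Theorem~\ref{t:rect} does not apply, since its hypothesis $\Lambda_{\A}^\ell = \{0\}$ fails; in particular, the Besicovitch--Federer step used in that proof can no longer be invoked to deduce $\ell$-rectifiability of $\{\theta^*_\ell > 0\}$. Proving the conjecture therefore seems to demand a genuinely new structural input --- for instance a Preiss-type rigidity result showing that $\A^k$-free measures whose tangent measures have positive and finite $\ell$-dimensional density are necessarily flat, or a Fourier-analytic approach exploiting the fact that $\hat\mu$ is supported on the characteristic variety $\setn{(\xi,v) \in (\R^d\setminus\{0\}) \times \R^m}{\bA^k(\xi)\,v = 0}$ and using the fibre structure of this variety over the characteristic directions of $\mathcal{N}_{\A}^\ell$ to force additional diffuseness of $\mu$. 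Without such an input the conjecture appears to lie genuinely beyond the techniques developed in the present paper.
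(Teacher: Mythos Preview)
This statement is presented in the paper as a \emph{conjecture}, not a theorem: no proof is offered, and the authors explicitly flag it as open (noting that it has also been raised by \textsc{Raita} and by \textsc{Ayoush--Wojciechowski}). There is therefore no ``paper's proof'' to compare your proposal against.

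Your write-up is not a proof either, and you are honest about this. What you have produced is a sound diagnosis of why the natural line of attack stalls: Theorem~\ref{t:unsatisfactory} only constrains the polar to lie in $\Lambda_\A^\ell$, which may be non-trivial in the gap $\ell_\Acal \le \ell < \ell_\Acal^*$; upgrading this to $\mathcal{N}_\A^\ell$ via blow-up would require a flatness result for $\A^k$-free tangent measures that is simply not available. This matches the paper's own remarks (in the paragraphs following Theorem~\ref{t:rect}) about why blow-up arguments are obstructed here --- the plane $\pi$ is in general not uniquely determined by $\lambda$ and $\A$. Your conclusion that the conjecture lies beyond the techniques of the paper is the correct assessment.

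One technical slip in your opening reduction: from $\dim_\HH \mu < \ell_\Acal^*$ you cannot always extract an \emph{integer} $\ell < \ell_\Acal^*$ with $\HH^\ell(E)=0$ and $|\mu|(E)>0$. If $\ell_\Acal^*-1 < \dim_\HH \mu < \ell_\Acal^*$, then $\mu \ll \HH^{\ell_\Acal^*-1}$ and no such integer $\ell$ exists; only a non-integer $\ell \in (\dim_\HH\mu,\ell_\Acal^*)$ would work, and for those Theorem~\ref{t:unsatisfactory} (which is stated for integer $\ell$) is unavailable. This does not change your overall verdict, since you acknowledge the argument is incomplete in any case.
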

We note that the same conjecture has also  been advanced by \textsc{Raita} in~\cite[Question~5.11]{Raita18TR}; also see~\cite[Conjecture~1.5]{AyoushWojciechowski17?}.

Further, if one extends \textsc{van~Schaftigen}'s terminology~\cite{Van-Schaftingen13} by saying that $\A$ is ``$\ell$-cocanceling'' provided that $\Ncal^{\ell-1}_\A = \{0\}$ (classical cocanceling then being $1$-cocanceling while ellipticity is \(d\)-cocancelling), the above conjecture  reads as
\[
  \text{$\A$ $\ell$-cocanceling, $\Acal \mu = 0$}  \qquad \Longrightarrow \qquad 
  \text{$\mu \ll\HH^{\ell}$.}
\]
Recently, a related (dual) notion of ``$\ell$-canceling'' operators has been introduced in~\cite{SpectorVanSchaftingen18?}.

We conclude this introduction by remarking that the above results can be used to provide dimensional estimates and rectifiability results for measures whose decomposability bundle, defined  in~\cite{AlbertiMarchese16}, has dimension at least \(\ell\). Namely, in this case the measure is absolutely continuous with respect to \(\I^{\ell}\) and the set where the upper \(\ell\)-dimensional density is positive, is rectifiable, compare with~\cite[Theorem 2.19]{Bate17} and with~\cite{AlbertiMassaccesiStepanov}. However, since by its very definition  the dimension of the decomposability bundle is stable under projections,  in this setting one can directly rely on~\cite[Corollary 1.12]{De-PhilippisRindler16}. This is essentially the strategy followed in the cited  references.

 \subsection*{Acknowledgments} 

This project has received funding from the European Research Council (ERC) under the European Union's Horizon 2020 research and innovation programe, grant agreement No 757254 (SINGULARITY), and from the INDAM-grant ``Geometric Variational Problems". We thank the anonymous referee for various comments that improved the presentation of this paper.

\section{Proofs}\label{sec:proofs}

The proof  of Theorem~~\ref{t:unsatisfactory} is a combination of ideas from~\cite{De-PhilippisRindler16} and~\cite{De-PhilippisDe-RosaGhiraldin18}. We start with the following lemma.

\begin{lemma}\label{l:strong constancy lemma}
Let $\B$ be a homogeneous $k^\text{\tiny{th}}$-order linear constant-coefficient operator on $\R^\ell$,
\[
\B:=\sum_{|\beta|=k} A_\beta\partial^\beta, \qquad A_\beta\in \R^n\otimes \R^m, \qquad \beta\in (\N \cup \{0\})^{\ell}.
\]
Let   $\{\nu_j\}\subset \mathcal{M}(B_1^\ell;\R^m)$, where \(B_1^\ell\subset \R^\ell\) is the unit ball in \(\R^\ell\), be a uniformly norm-bounded sequence of Radon measures satisfying the following assumptions:
\begin{itemize}
\item[(a1)] $\B \lambda $ is elliptic for some $\lambda  \in \R^m$, that is, 
\[
 \lambda \notin \ker \bB(\xi) \quad\text{for all $\xi \in \R^\ell \setminus \{0\}$},
\]
where $\bB(\xi):=\sum_{|\beta|=k} A_{\beta} \xi^\beta \in \R^n \otimes \R^m$;
\item[(a2)] $\{(\Id-\Delta)^{-\frac{s}{2}}\B\nu_j\}_j$ is pre-compact in $\L^1(B_1^\ell;\R^n)$ for some \(s<k\);
\item[(a3)] $\displaystyle \lim_{j\to \infty} \int_{B^\ell_1} \biggl| \frac{\d\nu_j}{\d|\nu_j|} - \lambda\biggr| \dd|\nu_j|= 0$.
\end{itemize}
Then, up to taking a subsequence, there exists $\theta \in \L^1(B^\ell_1)$ such that 
\begin{equation}\label{e:strong convergence}
\big||\nu_j|- \theta \LL^\ell \big|(B^\ell_t)\to 0 \quad\text{for all $0 < t < 1$.}	
\end{equation}
\end{lemma}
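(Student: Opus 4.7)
The strategy is to use the ellipticity in (a1) to construct a Fourier-multiplier right inverse for $\B$ applied to the direction $\lambda$, and to combine this with the $\L^1$-compactness in (a2) and the convergence of polars in (a3) in order to upgrade weak-$*$ precompactness of $\{|\nu_j|\}$ to total-variation convergence along a subsequence.

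First I would rescale so that $|\lambda|=1$ (observe (a1) forces $\lambda\neq 0$) and decompose
\[
\nu_j = \lambda|\nu_j| + \sigma_j, \qquad \sigma_j := \Bigl(\tfrac{\d\nu_j}{\d|\nu_j|}-\lambda\Bigr)|\nu_j|,
\]
so that $|\sigma_j|(B_1^\ell)\to 0$ by (a3). The ellipticity bound $|\bB(\xi)\lambda| \ge c|\xi|^k$ (from (a1) and the homogeneity of $\bB$) allows the construction of a smooth Fourier symbol
\[
\Phi(\xi) := (1-\chi(\xi))\,\frac{(1+|\xi|^2)^{s/2}(\bB(\xi)\lambda)^T}{(2\pi\ii)^k|\bB(\xi)\lambda|^2},
\]
where $\chi$ is a smooth cutoff equal to $1$ near the origin and compactly supported in $\R^\ell$. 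Then $\Phi$ is a smooth symbol of negative order $s-k<0$ satisfying $\Phi(\xi)\cdot[(2\pi\ii)^k(1+|\xi|^2)^{-s/2}\bB(\xi)\lambda] = 1-\chi(\xi)$, so that for every scalar tempered distribution $u$,
\[
\Phi(D)\bigl[(Id-\Delta)^{-s/2}\B(\lambda u)\bigr] = u - \mathcal{R}u,
\]
with $\mathcal{R}:=\chi(D)$ a smoothing operator (convolution with a Schwartz function). Plugging $u = |\nu_j|$ and using $\B(\lambda|\nu_j|) = \B\nu_j - \B\sigma_j$ yields the key identity
\[
|\nu_j| \;=\; \mathcal{R}|\nu_j| \;+\; \Phi(D)(Id-\Delta)^{-s/2}\B\nu_j \;-\; \Phi(D)(Id-\Delta)^{-s/2}\B\sigma_j.
\]

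I would then extract a subsequence along which each term on the right converges in $\L^1_\loc(B_1^\ell)$. The first term is locally uniformly bounded in all derivatives (by the $\mathcal{M}$-bound on $|\nu_j|$), so is precompact in $\L^1_\loc$. The second is precompact in $\L^1_\loc$ by (a2): since $\Phi$ is smooth of negative order $s-k<0$ with a low-frequency cutoff, its convolution kernel $\check\Phi$ lies in $\L^1(\R^\ell)$ (integrable singularity of type $|x|^{k-s-\ell}$ at the origin, rapid decay at infinity), and Young's inequality carries precompactness through convolution. The third term---the main challenge---uses that $\Phi(D)(Id-\Delta)^{-s/2}\B$ is a zeroth-order Fourier multiplier with smooth symbol: combining the Sobolev-duality embedding $\mathcal{M}(B_1^\ell) \embed W^{-\varepsilon,q}$ (for small $\varepsilon>0$ and $q>1$) with a factorization exposing a Riesz-potential-like piece of positive order $k-s$ and H\"ormander--Mihlin boundedness of $0$-order multipliers on $\L^p$, one obtains
\[
\bigl\|\Phi(D)(Id-\Delta)^{-s/2}\B\sigma_j\bigr\|_{\L^p(B_t^\ell)} \;\le\; C_{p,t}\,|\sigma_j|(B_1^\ell) \;\longrightarrow\; 0
\]
for some $p>1$ and every $t<1$.

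Putting these three estimates together, along a subsequence the right-hand side of the key identity converges in $\L^1_\loc(B_1^\ell)$ to some function $\theta$. Since $|\nu_j|$ is a positive measure that equals an $\L^1_\loc$ function as a distribution, it must itself be absolutely continuous with density converging to $\theta$ in $\L^1_\loc$, and this is exactly the claimed TV-convergence $||\nu_j|-\theta\LL^\ell|(B_t^\ell)\to 0$; the integrability $\theta\in\L^1(B_1^\ell)$ follows from the uniform $\mathcal{M}$-bound by Fatou. The hard part of the argument is the third term above: because $\Phi(D)(Id-\Delta)^{-s/2}\B$ is only a zeroth-order operator (and therefore not $\mathcal{M}\to\L^1$ bounded), the crucial estimate relies on combining the factorization with the Sobolev embedding of $\mathcal{M}$ into slightly negative Sobolev spaces to recover an $\L^p$-bound with $p>1$, yielding genuine $\L^1_\loc$-convergence to zero rather than mere distributional convergence.
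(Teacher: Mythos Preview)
Your overall strategy mirrors the paper's: decompose $\nu_j = \lambda|\nu_j| + \sigma_j$, build an order-$(s-k)$ multiplier $\Phi$ that inverts $\B\lambda$, and split $|\nu_j|$ into a smoothing piece $\mathcal{R}|\nu_j|$, a piece $\Phi(D)(\Id-\Delta)^{-s/2}\B\nu_j$ controlled by~(a2), and a ``bad'' zeroth-order piece acting on $\sigma_j$. (The paper additionally mollifies and localizes with cutoffs before taking Fourier transforms, which you omit; this sidesteps some technical issues --- for instance~(a2) only gives precompactness on $B_1^\ell$, not on all of $\R^\ell$ where the global operator $\Phi(D)$ acts --- but these are secondary.)

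The genuine gap is in your third term. The composite $\Phi(D)(\Id-\Delta)^{-s/2}\B$ has a smooth symbol of order \emph{exactly} zero (the orders $s-k$, $-s$, $k$ sum to zero). A zeroth-order Calder\'on--Zygmund operator does \emph{not} map $\mathcal{M}$ into $\L^p$ for any $p\ge 1$: its convolution kernel generically carries a non-integrable $|x|^{-\ell}$ singularity at the origin, and its action on a Dirac mass is not locally in any $\L^p$. Your proposed route through the embedding $\mathcal{M}\embed\Wrm^{-\varepsilon,q}$ cannot succeed, because a zeroth-order multiplier preserves $\Wrm^{-\varepsilon,q}$ without raising the Sobolev index, and there is no ``Riesz-potential piece of positive order $k-s$'' available to factor out --- the total order is zero, so any smoothing factor you split off must be paired with an anti-smoothing factor that undoes the gain. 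The claimed estimate $\|\Phi(D)(\Id-\Delta)^{-s/2}\B\sigma_j\|_{\L^p}\le C|\sigma_j|(B_1^\ell)$ for some $p>1$ is therefore false.

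What \emph{is} true is the weak-$(1,1)$ bound, which gives $\Phi(D)(\Id-\Delta)^{-s/2}\B\sigma_j\to 0$ in $\L^{1,\infty}$, hence only in measure. The paper closes the argument at precisely this point with a short but essential positivity trick (its Lemma~\ref{lem:conv_trick}): since $|\nu_j|\ge 0$ and the ``bad'' term equals $|\nu_j|$ minus a sequence converging in $\L^1_\loc$, the negative part of the bad term is dominated by that $\L^1_\loc$-convergent sequence and is therefore equiintegrable; combined with convergence to zero in measure and in the sense of distributions, a Vitali-type argument upgrades the convergence of the bad term to strong $\L^1_\loc$. This use of the nonnegativity of $|\nu_j|$ is the missing idea in your proposal.
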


\begin{proof}
The proof is a straightforward modification of the main step of the proof of~\cite[Theorem~1.1]{De-PhilippisRindler16}, see also~\cite{Allard86} and~\cite[Chapter~10]{Rindler18book}. We give it here in terse form for the sake of completeness.

Passing to a subsequence we may assume that $\abs{\nu_j} \overset{*}{\rightharpoonup} \sigma$ in $ \Crm^\infty_c(B_1^\ell)^*$ for some positive measure $\sigma \in \mathcal{M}^+(B_1^\ell)$.	We must show that $\sigma = \theta \LL^d$ and that \eqref{e:strong convergence} holds. 
Fix $t<1$ and two smooth cut-off functions $0 \le \chi \le \tilde{\chi} \le 1$ with $\chi=1$ on $B_t$, $\tilde{\chi}=1$ on $\spt(\chi)$, and $\spt(\chi)\subset \spt(\tilde{\chi}) \subset B_1$. 
Let $(\phi_{\eps})_{\eps > 0}$ be a family of smooth approximations of the identity.  
Choose $\epsilon_j \todown 0$ with $0 < \epsilon_j < 1-t$ for all $j$, such that 
\[ \absb{\abs{\nu_j}-\sigma}(B_t) \le \absb{\varphi_{\epsilon_j}\conv\abs{\nu_j}-\sigma}(B_t)+ 2^{-j}.\]
We will show that the sequence
\[ u_j:= \chi \, (\varphi_{\epsilon_j}\conv\abs{\nu_j}) \]
is pre-compact in $\Lrm^1(B_1)$, which proves the lemma.

For every $j$ we set $f_j := \B\nu_j$ and compute
\begin{align*}
	\mathcal{B}(\lambda u_j) &=\phantom{:} \chi \, \mathcal{B}\biggl[ \varphi_{\epsilon_j} \conv\left( \Bigl( \lambda-\textstyle\frac{\di\nu_j}{\di\abs{\nu_j}} \Bigr) \abs{\nu_j}\right)\biggr]  + \chi \, (\varphi_{\epsilon_j}\conv f_j)+ [\mathcal{B},\chi](\lambda \, \varphi_{\epsilon_j}\conv \abs{\nu_j})\\
	&=\phantom{:} \mathcal{B} \biggl[ \chi \,  \varphi_{\epsilon_j} \conv\left( \Bigl( \lambda-\textstyle\frac{\di\nu_j}{\di\abs{\nu_j}} \Bigr) \abs{\nu_j}\right) \biggr] + \chi \, (\varphi_{\epsilon_j}\conv f_j)+ [\mathcal{B},\chi](\tilde{\chi} \, \varphi_{\epsilon_j}\conv \nu_j)\\
	 &=: \mathcal{B} V_j + \chi \, (\varphi_{\epsilon_j}\conv f_j) + [\mathcal{B},\chi]W_j.
\end{align*}
Note that the commutator $[\mathcal{B},\chi] := \mathcal{B} \circ \chi - \chi \circ \mathcal{B}$ is a differential operator of order at most $k-1$ with smooth coefficients.
Taking the Fourier transform (which we denote by $\mathcal{F}$ or by the hat~``$\hat{\phantom{w}}$''), multiplying by $[\mathbb{B}(\xi)\lambda]^*$, and adding $\hat{u}_j(\xi)$,  we obtain 
\begin{align*}
 (1+\abs{\mathbb{B}\lambda}^2) \hat{u}_j = [\mathbb{B}\lambda]^*\mathbb{B}\hat{V}_j + [\mathbb{B}\lambda]^* 
 \Fcal[\chi \, (\varphi_{\epsilon_j}\conv f_j)]  + [\mathbb{B}\lambda]^*\Fcal[[\mathcal{B},\chi]W_j]+\hat{u}_j.
\end{align*}
Hence, 
\[
	u_j = T_0[V_j] + T_1[\chi \, (\varphi_{\epsilon_j}\conv f_j)] + T_2[W_j]+T_3[u_j]
\]
with the pseudo-differential operators $T_0,\ldots,T_3$ defined as follows:
\begin{align*}
T_0[V]&:= \mathcal{F}^{-1}\left[ \frac{[\mathbb{B}\lambda]^*\mathbb{B}}{1+\abs{\mathbb{B}\lambda}^2} \, \hat{V}\right],\\
T_1[f]&:= \mathcal{F}^{-1}\left[ \frac{[\mathbb{B}\lambda]^*}{1+\abs{\mathbb{B}\lambda}^2} \, \hat{f}\right],\\
T_2[W]&:=\mathcal{F}^{-1}\left[ \frac{[\mathbb{B}\lambda]^*}{1+\abs{\mathbb{B}\lambda}^2} \, \mathcal{F}[{[\mathcal{B},\chi]W}]\right],\\
T_3[u]&:=\mathcal{F}^{-1}\left[ \frac{1}{1+\abs{\mathbb{B}\lambda}^2} \hat u\right].
\end{align*}
We see that, in the language of pseudo-differential operators (see for instance~\cite[Chapter~VI]{Stein93}):
\begin{itemize}
\item[(i)] the symbol for $T_0$ is a H\"ormander--Mihlin multiplier (i.e.\ a pseudo-differential operator  with smooth symbol of order $0$) since, due to~(a1), $\abs{\Bbb(\xi)\lambda} \geq c\abs{\xi}^k$ for some $c > 0$ and all $\xi \in \R^\ell$;
\item[(ii)]  $T_1$ is a  pseudo-differential operator  with smooth symbol of order \(-k\);
\item[(iii)] $T_2$ is a pseudo-differential operator  with smooth symbol of order \(-1\); 
\item[(iv)] $T_3$ is a pseudo-differential operator  with smooth symbol of order \(-2k\).
\end{itemize}
By the  classical theory of Fourier multipliers and pseudo-differential operators we then get the following:
\begin{itemize}
\item[(I)] $T_0$ is bounded from $\Lrm^1$ to $\Lrm^{1, \infty}$ (weak-$\Lrm^1$), see e.g.~\cite[Theorem~6.2.7]{Grafakos14}. Owing to (a3), it follows that for $j \to \infty$ we obtain
\begin{align*}
  \int \abs{V_j} \dd x &\le\int \chi \, \varphi_{\epsilon_j}\conv\left(\absBB{ \frac{\di\nu_j}{\di\abs{\nu_j}} - \lambda} \, \abs{\nu_j}\right) \dd x \\ 
  &\le \int_{B_1}\absBB{ \frac{\di\nu_j}{\di\abs{\nu_j}} - \lambda} \dd\abs{\nu_j} \\
  &\to 0.
\end{align*}
Thus,
\[
	\sup_{t \ge 0} t \, \LL^d(\{ \abs{T_0[V_j]}>t\}) \le C \int \abs{V_j} \dd x \to 0  \qquad\text{as $j \to \infty$.}
\]
That is, $T_0[V_j] \to 0$ in measure.
\item[(II)] Due to (a2), $T_1[f_j]$ is pre-compact in $\Lrm^1$ (this follows directly by the symbolic calculus~\cite[Section~VI.3]{Stein93} or direct manipulation of Fourier multipliers).
\item[(III)] $T_2$ and \(T_3\) are compact operators from $\Lrm^1_c$ to $\Lrm^1_{\text{loc}}$ (see for instance~\cite[Propositions~VI.4,~VI.5]{Stein93} in conjunction with Lemma~10.1 in~\cite{De-PhilippisRindler16} or Lemma~10.11 in~\cite{Rindler18book}) and thus the families $\{T_2[W_j]\}$, \(\{T_3[u_j]\}\) are pre-compact in $\Lrm^1$.
\end{itemize}
Hence, passing to a subsequence, we may assume that 
$ T_1[f_j] + T_2[W_j] +T_3[u_j]\to \theta$ in $\Lrm^1_{\text{loc}}$ and $T_0[V_j] \to 0$ in measure. Since furthermore $u_j \ge 0$, we can apply Lemma~\ref{lem:conv_trick} below and deduce that $T_0[V_j] \to 0$ strongly in $\Lrm^1$. This concludes the proof.
\end{proof}

The following is Lemma~2.2 in~\cite{De-PhilippisRindler16}, we report here  its straightforward  proof for the sake of completeness.
\begin{lemma}\label{lem:conv_trick}
Let $\{f_j\} \subset \Lrm^1(B_1)$ be such that
\begin{enumerate}[(i)]
\item $f_j\toweakstar 0$ in $ \Crm^\infty_c(B_1)^*$;
\item the negative parts $f_j^- := \max\{-f_j,0\}$ of the $f_j$'s converge to zero in measure, i.e.,
 \[
  \lim_{j\to \infty}\, \absb{\setb{x \in B_1}{f_j^-(x) > \delta}} = 0  \qquad\text{for every $\delta>0$;}
\]
\item the family of negative parts $\{f_j^{-}\}$ is equiintegrable.
\end{enumerate}
Then, $f_j \to 0$ in $\Lrm^1_\loc(B_1)$.
\end{lemma}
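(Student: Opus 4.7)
The plan is to upgrade the three hypotheses to strong $\Lrm^1$-control of the negative parts, and then to use the distributional convergence to force the positive parts to vanish locally.

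First I would observe that conditions (ii) and (iii) together fall exactly under the hypotheses of Vitali's convergence theorem: convergence in measure of a sequence in $\Lrm^1(B_1)$ together with equiintegrability upgrades to convergence in the $\Lrm^1$-norm. This immediately yields
\[
  f_j^- \tolong 0 \qquad \text{in } \Lrm^1(B_1).
\]
This is the only non-trivial "input" to the proof, and the equiintegrability hypothesis is there precisely so that Vitali applies.

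Next, the strategy for the positive parts is to exploit the sign information in combination with (i). Fix a compact set $K \Subset B_1$ and choose a non-negative cut-off $\phi \in \Crm^\infty_c(B_1)$ with $\phi \equiv 1$ on $K$ and $0 \le \phi \le 1$. Writing $f_j^+ = f_j + f_j^-$ and integrating against $\phi$ gives
\[
  0 \le \int_K f_j^+ \dd x \le \int_{B_1} f_j^+ \phi \dd x = \int_{B_1} f_j \phi \dd x + \int_{B_1} f_j^- \phi \dd x.
\]
By (i) the first term on the right tends to $0$ (as $\phi$ is an admissible test function), and by the first step the second term is bounded by $\|f_j^-\|_{\Lrm^1(B_1)} \to 0$. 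Hence $\int_K f_j^+ \dd x \to 0$, and together with $\int_K f_j^- \dd x \to 0$ we conclude $\|f_j\|_{\Lrm^1(K)} \to 0$. Since $K$ was arbitrary, this gives $f_j \to 0$ in $\Lrm^1_\loc(B_1)$.

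There is no real obstacle here: the proof is essentially a one-line combination of Vitali's theorem with the elementary identity $|f_j| = 2 f_j^- + f_j$, the point being that any cancellation between $f_j^+$ and $f_j^-$ cannot produce mass in the limit because $f_j^-$ has already been shown to disappear in $\Lrm^1$. The only place where one needs to be slightly careful is recognizing that distributional (weak-$*$) convergence, which in general is far too weak to control $\Lrm^1$-norms, becomes sufficient once the negative parts are tamed, because then testing against non-negative $\Crm^\infty_c$ functions dominates the $\Lrm^1_\loc$-norm from above.
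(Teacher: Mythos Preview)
Your proof is correct and is essentially identical to the paper's argument: both use Vitali's theorem on the negative parts via (ii)--(iii) to get $f_j^- \to 0$ in $\Lrm^1$, and then exploit the identity $|f_j| = f_j + 2f_j^-$ (equivalently, $f_j^+ = f_j + f_j^-$) tested against a non-negative cut-off to conclude. The paper writes this in a single displayed inequality rather than separating the two steps, but the content is the same.
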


\begin{proof}
Let $\phi \in \Crm^\infty_c(B_1;[0,1])$. Then,
\[
\int \phi \abs{f_j} \dd x =\int \phi f_j \dd x +2\int \phi f_j^- \dd x
 \leq \int \phi f_j \dd x +2\int f_j^- \dd x.
\]
The first term on the right-hand side vanishes as $j\to \infty$ by assumption~(i). Vitali's convergence theorem in conjunction with assumptions~(ii) and~(iii) further gives that the second term also tends to zero in the limit.
\end{proof}

\begin{proof}[Proof of Theorem~\ref{t:unsatisfactory}]
Let \(E\) be such that \(\I^\ell(E)=0\) and let us define 
\[ 
F:=\setBB{ x \in E }{ \mbox{\(\lambda_x := \dfrac{\d \mu}{\d|\mu|}(x)\) exists, belongs to \(\mathbb S^{d-1}\), and }\dfrac{\d \mu}{\d|\mu|}(x) \notin \Lambda^\ell_\A }. 
\]
By contradiction, let us suppose  that $|\mu|(F)>0$.  Note that, by the very definition of \(F\), for all \(x\in F\) there exists an $\ell$-dimensional plane $\tilde{\pi}_x\subset \R^d$ such that it holds that
\[
  \mathbb{A}^k(\xi)\lambda_x \neq 0  \quad\text{for all $\xi \in \tilde{\pi}_x \setminus \{0\}$.}
\]
By continuity, the same is true for all planes \(\pi'\) in a neighbourhood of \(\tilde{\pi}_x\). In particular, since by assumption \(\mathcal I^\ell (F)=0\), for every \(x\in E\)  there is an $\ell$-dimensional plane \(\pi_{x}\) such that
\begin{equation}\label{e:hausdorff measure -1}
\mathbb{A}^k(\xi)\lambda_x \neq 0 \quad\textrm{for all $\xi \in \pi_x\setminus\{0\}$}\qquad \textrm{and}\qquad \HH^\ell(\p_{\pi_x}(F))=0.
\end{equation}
Since we assume $|\mu|(F)>0$, by standard measure-theoretic arguments (see the proof of~\cite[Theorem~1.1]{De-PhilippisRindler16} for details), we can find a point  $x_0 \in F$, an $\ell$-dimensional plane \(\pi_0\), and a sequence of radii $r_j \todown 0$ with the following properties:
\begin{itemize}
\item[(b1)] \(\lambda:=\dfrac{\d \mu}{\d|\mu|}(x_0)\) exists, belongs to \(\mathbb S^{m-1}\), and satisfies
\begin{equation}\label{e:hausdorff measure 0}
\mathbb{A}^k(\xi)\lambda \neq 0 \qquad\textrm{for all $\xi \in \pi_0\setminus\{0\}$;}
\end{equation}
\item[(b2)] setting \(\tilde\mu^s:=\mu\res F\),
\[
\lim_{j\to\infty} \frac{ \abs{\tilde \mu^s}(B_{2r_j}(x_0))}{\abs{\mu}(B_{2r_j}(x_0))}=1\qquad \mbox{and} \qquad  \lim_{j\to \infty} \dashint_{B_{2r_j}(x_0)}  \biggl|\frac{\d\mu}{\d|\mu|} - \lambda\biggr| \dd \abs{\mu}= 0;
\]
\item[(b3)] for
\[
  \mu_j:=\frac{T^{x_0,r_j}_\#\mu}{|\mu|(B_{2r_j}(x_0))}
\]
the following convergence holds: 
\[
|\mu_j|:=\frac{T^{x_0,r_j}_\#|\mu|}{|\mu|(B_{2r_j}(x_0))}\overset{*}{\rightharpoonup} \sigma \]
 for some $\sigma \in \Mcal^+(B_2)$ with $\sigma\res{B_{1/2}}\neq 0$. Here, $T^{x_0,r_j}(x):=\dfrac{x-x_0}{r_j}$.
\end{itemize}

After a rotation we may assume that $\pi_{0}=\R^\ell \times \{0\}$. We shall use the coordinates $(y,z) \in \R^\ell\times \R^{d-\ell}$ and we will denote by $\p$ the orthogonal projection onto $\R^\ell$.
Note that 
\[
\A^k\mu_j=R_j \qquad\text{in the sense of distributions},
\]
where \(\A^k\) is the $k^\text{\tiny{th}}$-order homogeneous part of \(\A\), i.e.,
\[
\A^k:=\sum_{|\alpha|=k} A_\alpha \partial^\alpha,
\]
and \(R_j\) contains all derivatives of $\mu_j$ of order  at most $k-1$.
Thus,
\begin{equation}\label{eq:negativesob}
\text{$\{R_j\}$ is pre-compact in \(\Wrm_{\rm loc}^{-k,q}(\R^{d})\) for $1 < q < d/(d-1)$,}
\end{equation}
where \( \Wrm_{\rm loc}^{-k,q}(\R^{d})\) is the local version of the dual of the Sobolev space  \(\Wrm^{k,q'}(\R^{d})\), \(q'=q/(q-1)\).

Define
\[
\B:=\A^k\res \pi_{0}:=\sum_{\substack{\abs{\alpha}=k\\ \alpha_i =0 \text{ for } i \ge \ell+1  }} A_\alpha \partial^\alpha.
\]
Note that \(\B\) is  a homogeneous constant-coefficient linear differential operator such that  for any \(\psi \in \Crm^\infty(\R^\ell)\), 
\begin{equation}\label{e:freez}
(\B\psi)(\p x)=\A^k(\psi\circ \p)(x), \qquad x \in \R^d,
\end{equation}
and, by~\eqref{e:hausdorff measure 0},
\[
\lambda \notin \ker \bB(\xi)\qquad\text{for all $\xi \in \R^\ell\setminus \{0\}$.}
\]
Moreover, the measure
 \begin{equation*}
 \tilde \mu_j^s:=\frac{T^{x_0,r_j}_\#\tilde \mu^s}{|\mu|(B_{2r_j}(x_0))}
 \end{equation*}
is concentrated on the the set \(F_j:=T^{x_0,r_j}(F)\), which by~\eqref{e:hausdorff measure -1} satisfies
\begin{equation}\label{e:sing}
\HH^\ell(\p(F_j))=0.
\end{equation}

We consider the (localized) sequence of measures 
\[ 
\nu_j:= \p_\# (\chi \mu_j) \in \Mcal(B_2^\ell),
\] 
where \(\chi(y,z)=\tilde \chi(z)\) for some cut-off function \(\tilde \chi \in \Crm^\infty_c (B_1^{d-\ell};[0,1])\) satisfying $\chi \equiv 1$ on $B^{d-\ell}_{1/2}$. Our goal is to apply Lemma~\ref{l:strong constancy lemma} to the sequence $\{\nu_j\}\subset \mathcal{M}(B_1^\ell;\R^m)$, from where we will reach a contradiction. We must first check that $\{\nu_j\}$ satisfies the assumptions of Lemma~\ref{l:strong constancy lemma}. Since 
\[
|\nu_j|(B^\ell_1) \le |\chi\mu_j|(B_2) \le 1, 
\]
the sequence is equi-bounded.   We further claim that (b2) implies that 
\begin{equation}\label{e:from mu to nu}
\lim_{j\to \infty} \big||\nu_j|-\p_\# (\chi |\mu_j|) \big|(B^\ell_1) =0\quad\text{ and }\quad \lim_{j\to \infty} \int_{B^\ell_1} \biggl|\frac{\d\nu_j}{\d|\nu_j|}- \lambda\biggr|  \dd \nu_j =0.	
\end{equation}
Consequently, assumption~(a3) in Lemma~\ref{l:strong constancy lemma} is then satisfied for $\{\nu_j\}$.

Concerning the assumption~(a2), we argue as follows. Let $\psi \in \Crm^\infty_c(B^\ell_1;\R^n)$. Then, for the adjoint
\[
  \B^*:=(-1)^{k}\sum_{\substack{\alpha \in (\N \cup \{0\})^\ell\\ |\alpha| = k}} A_\alpha^* \, \partial^\alpha,
\]
equation~\eqref{e:freez} gives 
\begin{align*}
\int \B^*\psi  \dd \nu_j & = \int (\A^k)^* (\psi \circ \p)(y) \, \chi(z)   \dd \mu_j(y,z)\\
&=\int (\A^k)^*(\chi (\psi \circ \p)) - [(\A^k)^*,\chi](\psi \circ \p) \dd \mu_j	\\
&= \bigl\langle \chi R_{j},\psi\circ \p\bigr\rangle+ \sum_{\substack{\beta \in (\N \cup \{0\})^\ell\\ |\beta|<k}}  \int  \partial^\beta \psi(y) \, \, C_\beta(z)  \dd \mu_j(y,z),
\end{align*}
where $[(\A^k)^*,\chi] = (\A^k)^* \circ \chi - \chi \circ (\A^k)^*$ is the commutator of $(\A^k)^*$ and $\chi$, as well as $C_\beta\in \Crm^\infty_c(B_1^{d-\ell}) $.
Hence, in the sense of distributions,  
\[
 \B\nu_j = \p_\# (\chi R_j)+ \sum_{\substack{\beta \in (\N \cup \{0\})^\ell \\ |\beta|<k}} (-1)^{|\beta|}\partial^\beta \p_\# (C_\beta\mu_j). 
 \]
Note that \(\chi R_j\) is compactly supported in the \(z\)-direction and thus the push-forward under \(\p\) is well defined.  Exactly as in the proof of~\cite[Theorem~1.1]{De-PhilippisRindler16} we infer the following: since for each $\beta$ we have $\p_\# (C_\beta\mu_j) \in \mathcal{M}(B_1^\ell;\R^m)$ and $|\beta|<k$, the family \(\{(\Id-\Delta)^{-\frac s2}\partial^\beta \p_\# (C_\beta\mu_j)\}_j\) is pre-compact in \(\L^1_{\rm loc}(\R^\ell)\) for every \(s\in (k-1,k)\), and by~\eqref{eq:negativesob} the same holds for  \(\{(\Id-\Delta)^{-\frac s2}\p_\# (\chi R_j)\}_j\).
 
Thus,  we can apply Lemma~\ref{l:strong constancy lemma} to deduce (up to taking a subsequence) that
\[
 \lim_{j \to \infty}\big||\nu_j |- \theta \LL^\ell\big|(B^\ell_{1/2})=0
 \]
for some \(\theta \in \L^1(B^\ell_1)\). Consequently,
\begin{align*}
\sigma(B_{1/2}) &\!\!\stackrel{\textrm{(b3)}}\le \liminf_{j \to \infty} \abs{\mu_j}(B_{1/2})\\
& \!\!\stackrel{\textrm{(b2)}}= \liminf_{j \to \infty} |\tilde\mu_j^s|(B_{1/2})
\\
&= \liminf_{j \to \infty} |\mu_j|(B_{1/2}\cap F_j)\\
&\le \liminf_{j \to \infty} \big|\p_\#(\chi |\mu_j|)\big|(B^\ell_{1/2}\cap \p(F_j))
\\
&\!\!\stackrel{\eqref{e:from mu to nu}}\le\liminf_{j \to \infty} |\nu_j|(B^\ell_{1/2}\cap \p(F_j))\\
&\le \int_{B^\ell_{1/2}\cap \p(F_j)} \theta  \dd \LL^\ell + \lim_{j \to \infty}\big||\nu_j| - \theta \d\LL\big|(B^\ell_{1/2}) \\
&\!\!\stackrel{\eqref{e:sing}}= 0.
\end{align*}
However, $\sigma(B_{1/2}) = 0$ is a contradiction to~(b3).

It remains to show the claim~\eqref{e:from mu to nu}. By disintegration, see for instance~\cite[Theorem~2.28]{AmbrosioFuscoPallara00},  for every  $j\in \N$, 
\[ 
\chi |\mu_j| = \nu^j_y \otimes \kappa_j \quad\text{with}\quad \kappa_j = \p_\# (\chi |\mu_j|). 
\]
Here, each $\nu_y^j$ is a  probability measure supported in \(B_{1}^{d-\ell}\). Let
\[
  f_j(y,z):= \frac{\d\mu_j}{\d|\mu_j|}(y,z).
\]
Then, 
\[  
\p_\# (\chi \mu_j) = g_j(y) \kappa_j(\di y) = \nu_j  \quad\text{with}\quad g_j(y):=\int_{\R^{d-\ell}} f_j(y,z)  \dd \nu^j_y(z). 
\]
 In particular, $|g_j| \le 1$. Furthermore, since $|\lambda|=1$,   
\begin{align*} 
0 &\le \int_{B^{\ell}_1} (1 - |g_j(y)|) \dd \kappa_j(y) \\
&= \int_{B_1^{\ell}} \biggl| \int_{B_{1}^{d-\ell}} \lambda  \dd \nu^j_y(z) \biggr|\dd \kappa_j(y)- \int_{B_1^{\ell}}\biggl|\int_{B_{1}^{d-\ell}} f_j(y,z)  \dd \nu^j_y(z)\biggr| \dd \kappa_j(y) \\
&\le \int_{B_1^\ell\times B_{1}^{d-\ell}} \abs{f_j-\lambda}  \dd (\nu^j_y\otimes\kappa_j) \\
&\le \int_{B_2} |f_j -\lambda|  \dd |\mu_j|  \stackrel{\textrm{(b2)}}\to 0 \quad\textrm{as \(j\to \infty\)}.
\end{align*} 
Since $\abs{\nu_j}= \abs{g_j} \kappa_j$, this proves the first part of~\eqref{e:from mu to nu}. The second part follows from this estimate and (b2) because
 \begin{align*}
 \int_{B^{\ell}_1} \biggl|\frac{g_j}{|g_j|} - \lambda\biggr|  \dd |\nu_j|
 &= \int_{B^{\ell}_1} \big|g_j - |g_j|\lambda\big| \dd\kappa_j \\
 &\le \int_{B^{\ell}_1} |g_j -\lambda| \dd\kappa_j+ \int_{B^{\ell}_1} (1- |g_j|) \dd\kappa_j \\
&\le \int_{B_2} | f_j - \lambda| \d|\mu_j| + \int_{B^{\ell}_1} (1- |g_j|) \dd\kappa_j  \to 0  \quad\textrm{as \(j\to \infty\).}
\end{align*}
This concludes the proof.
\end{proof}

Before proving Theorem~\ref{t:rect}, let us start with the following elementary lemma:
\begin{lemma}\label{lm:rec}
Let \(\mu\) be an \(\A\)-free measure and assume that there exists an \(\ell\)-rectifiable set \(R\)  such that 
\begin{equation}\label{aac1}
\HH^{\ell}\res R \ll |\mu|\res R\ll \HH^{\ell}\res R.
\end{equation}
Then,
\begin{equation}\label{mutual_abs_cont1}
\mu\res R=\theta^*_{\ell}(|\mu|)\lambda\,\HH^\ell\res R,
\end{equation}
where \(\lambda \colon R \to \mathbb S^{m-1}\) is $\HH^\ell$-measurable. Moreover  for \(\HH^\ell\)-almost every \(x_0\in R\),
\begin{equation}\label{bu1}
(2r)^{-\ell}(T^{x_0,r})_\#\mu\overset{*}{\rightharpoonup} \theta^{*}_{\ell}(|\mu|)(x_0) \lambda(x_0)\HH^\ell\res (T_{x_0} R)  \qquad
\text{as $r \todown 0$,}
\end{equation}
and
\[
\lambda(x_0)\in \bigcap_{\xi \in (T_{x_0} R)^\perp} \ker \bA^k(\xi),
\]
where  \(T_{x_0} R\) is the the approximate tangent plane to \(R\) at \(x_0\).
\end{lemma}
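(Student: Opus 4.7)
The argument splits into three steps: identify $\mu\res R$ as a density-weighted multiple of $\HH^\ell\res R$, compute the blow-up of $\mu$ at $\HH^\ell$-typical points of $R$, and extract the kernel constraint from $\A$-freeness.

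For \eqref{mutual_abs_cont1}, the two-sided absolute continuity \eqref{aac1} combined with the Besicovitch differentiation theorem, together with the fact that $\HH^\ell\res R$ has $\ell$-dimensional density $1$ at $\HH^\ell$-a.e.\ point of the rectifiable set $R$, yield that $\lim_{r\to 0}|\mu|(B_r(x))/(2r)^\ell$ exists and agrees with $\d|\mu|\res R/\d\HH^\ell\res R$ at $\HH^\ell$-a.e.\ $x\in R$. Hence $|\mu|\res R = \theta^*_\ell(|\mu|)\,\HH^\ell\res R$, and \eqref{mutual_abs_cont1} follows from the polar decomposition $\mu = \lambda\,|\mu|$, where $\lambda\in \mathbb{S}^{m-1}$ is defined $|\mu|$-a.e.

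For \eqref{bu1} I would restrict to points $x_0\in R$ at which (i) the approximate tangent plane $T_{x_0}R$ exists, (ii) $\lambda$ and $\theta^*_\ell(|\mu|)$ are $\HH^\ell\res R$-Lebesgue continuous, and (iii) $|\mu|\res R^c$ has zero upper $\ell$-density at $x_0$. Property (iii) holds $\HH^\ell$-a.e.\ on $R$ because $|\mu|\res R^c$ and $\HH^\ell\res R$ have disjoint supports (so are mutually singular) and by Besicovitch $|\mu|\res R^c(B_r(x_0))/(\HH^\ell\res R)(B_r(x_0))\to 0$; the denominator behaves like $(2r)^\ell$ by (i). Decomposing $\mu = \mu\res R + \mu\res R^c$, the second summand contributes no mass in the limit, while (i)--(ii) give
\[
(2r)^{-\ell}(T^{x_0,r})_\#(\mu\res R) \overset{*}{\rightharpoonup} \theta^*_\ell(|\mu|)(x_0)\,\lambda(x_0)\,\HH^\ell\res T_{x_0}R.
\]

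For \eqref{kernel}, I would pass the PDE $\A\mu = 0$ to the blow-up. Using $\partial^\alpha[\phi((\cdot - x_0)/r)] = r^{-|\alpha|}(\partial^\alpha\phi)((\cdot-x_0)/r)$, the rescaled measures $\mu_r := (2r)^{-\ell}(T^{x_0,r})_\#\mu$ satisfy, distributionally,
\[
\A^k\mu_r + \sum_{|\alpha|<k} r^{k-|\alpha|} A_\alpha\partial^\alpha \mu_r = 0.
\]
As $r \todown 0$, the lower-order terms vanish since $r^{k-|\alpha|}\to 0$ and $\{\mu_r\}$ is uniformly locally bounded in total variation (by the finiteness of $\theta^*_\ell(|\mu|)(x_0)$). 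Hence the weak-$*$ limit $\nu = \theta^*_\ell(|\mu|)(x_0)\,\lambda(x_0)\,\HH^\ell\res T_{x_0}R$ from step two is $\A^k$-free. Because $\HH^\ell\res T_{x_0}R$ is translation-invariant along $T_{x_0}R$, its Fourier transform is a tempered distribution supported on $(T_{x_0}R)^\perp$; thus $\A^k\nu = 0$ reduces to $\bA^k(\xi)\lambda(x_0)=0$ for every $\xi\in(T_{x_0}R)^\perp$, which is \eqref{kernel}. I expect this final step to be the main obstacle: one must pass the full PDE (not just its principal part) to the blow-up limit and control the lower-order terms in an appropriate negative Sobolev space, and then extract the algebraic kernel condition via the Fourier support of the tangent-plane measure. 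The first two steps are routine applications of Besicovitch-type differentiation combined with the polar decomposition.
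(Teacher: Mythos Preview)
Your proposal is correct and follows essentially the same route as the paper: represent $\mu\res R$ via Radon--Nikod\'ym/Besicovitch differentiation against $\HH^\ell\res R$, perform a standard blow-up (the paper simply says ``a standard blow-up argument'' where you spell out the treatment of $\mu\res R^c$), and then read off the kernel constraint by taking the Fourier transform of the $\A^k$-free limit $\lambda(x_0)\,\HH^\ell\res T_{x_0}R$. Your closing worry about negative Sobolev control of the lower-order terms is unnecessary---the elementary scaling argument you already gave (the factor $r^{k-|\alpha|}$ against a locally bounded family $\{\mu_r\}$) is exactly what the paper uses, and it suffices.
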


\begin{proof}
By~\cite[Theorem~6.9]{Mattila95},
\[
\HH^\ell\big(\{\theta_{\ell}^*(|\mu|)=+\infty\}\big)=0.
\]
Hence, by~\eqref{aac1}, we can assume that \(R\subset  \{\theta_{\ell}^*(|\mu|)<+\infty\}\). In particular, by~\cite[Theorem~6.9]{Mattila95} again,  \(\HH^{\ell}\res R\) is \(\sigma\)-finite and the Radon--Nikod\'{y}m theorem implies
\[
\mu\res R= f \HH^{\ell}\res R 
\]
with  \(f\in \L^1(R,\HH^\ell;\R^m)\) such that \(|f|>0\) \((\HH^{\ell}\res R)\)-almost everywhere. A standard blow-up argument then gives~\eqref{mutual_abs_cont1} and~\eqref{bu1}. Choosing a point such that the conclusion of~\eqref{bu1} holds true and blowing up around that point, one deduces that the measure 
\[
\bar{\mu}:=\lambda (x_0) \, \HH^\ell\res (T_{x_0} R)
\] 
is \(\A^k\)-free, where \(\A^k\) is the \(k\)-homogeneous part of \(\A\). Since \(\HH^\ell\res (T_{x_0} R)\) is a tempered distribution, by taking the Fourier transform of the equation \(\A^k\bar{\mu}=0\), we obtain
\[
\bA^k(\xi) \lambda(x_0) \, \HH^{d-\ell}\res (T_{x_0} R)^\perp =0,
\]
which implies that \(\bA^k(\xi) \lambda(x_0)=0\) for all \(\xi \in (T_{x_0} R)^\perp\). This concludes the proof.
\end{proof}

\begin{proof}[Proof of Theorem~\ref{t:rect}.]
By classical measure theory, see~\cite[Theorem~6.9]{Mattila95},
\[
\HH^\ell\big(\{\theta_{\ell}^*(|\mu|)=+\infty\}\big)=0.
\]
Hence, the assumption \(\Lambda_{\A}^\ell=\{0\}\) and Corollary~\ref{cor:aac} together imply that
\[
  |\mu|\big(\{\theta_{\ell}^*(|\mu|)=+\infty\}\big) = 0.
\]
By~\cite[Theorem~6.9]{Mattila95}, the set 
\[
G:=\{\theta^*_{\ell}(\mu)\in (0,+\infty)\}
\]
is \(\HH^\ell\) \(\sigma\)-finite and 
\begin{equation}\label{abs}
|\mu|\res G\ll\HH^\ell\res G\ll|\mu|\res G.
\end{equation}
According to~\cite[Theorem~15.6]{Mattila95} we may write 
\[
G=R\cup S,
\]
where \(R\) is \(\HH^\ell\)-rectifiable, \(S\) is purely unrectifiable and \(\HH^\ell(R\cap S)=0\). By the Besicovitch--Federer rectifiability theorem, see~\cite[Section~3.3.13]{Federer69},~\cite[Chapter~18]{Mattila95} or~\cite{White98},
\[
  \I^{\ell}(S)=0.
\]
Hence, since \(\Lambda_{A}^\ell=\{0\}\), Corollary~\ref{cor:aac} implies that \(|\mu|(S)=0\). Therefore,
\[
\mu\res\{\theta^*_{\ell}(|\mu|)>0\}=\mu\res G=\mu\res R.
\]
Owing to this and to \eqref{abs} we can apply  Lemma~\ref{lm:rec}  and thus conclude the proof.
\end{proof}

\section{Applications}\label{sec:applications}

In this section~we sketch applications of the abstract results to several common differential operators $\A$. In this way we recover and improve several known results.

\subsection{Rectifiability of \(\BV\)-gradients} Let \(\mu=Du\in \mathcal M(U;\R^p\otimes \R^d)\), where \(u\in \BV (U; \R^p)\), $U \subset \R^d$ open; see~\cite{AmbrosioFuscoPallara00} for details on this space of functions of bounded variation. Then \(\mu\) is \(\curl\)-free. By a direct computation,
\[
\ker (\curl )(\xi)=\setb{ a\otimes \xi }{ a\in \R^p, \xi\in \R^d },  \qquad \xi \in \R^d,
\]
hence \(\Lambda^{d-1}_{\curl}=\{0\}\) and Corollary~\ref{cor:aac} in conjunction with Theorem~\ref{t:rect} implies the well-known fact that $|Du|\ll \HH^{d-1}$ and 
\[
Du\res\{\theta_{d-1}^*(|Du|)>0\}=  a(x)\otimes n_R(x) \, \HH^{d-1}_x\res R
\]
for some \((d-1)\)-rectifiable set \(R \subset U\) and where $ n_R : R \to \Sbb^{d-1}$ is a measurable map with the property that $ n_R(x)$ is orthogonal to $T_xR$ at $\Hcal^{d-1}$-almost every $x$. This is the well-known rectifiability result of $\BV$-maps (see~\cite{AmbrosioFuscoPallara00}). 

\subsection{Rectifiability of symmetrized  gradients} Let $U \subset \R^d$ be an open set and let \(\mu=Eu\in \mathcal M(U, (\R^d\otimes\R^d)_{\rm sym})\), where \(u\in \BD (U; \R^d)\) is a function of bounded deformation and 
\[
 Eu \coloneqq \frac{Du+Du^{T}}{2}
\]
is the symmetric part of the distributional derivative of $u$. Then \(\mu\) is \(\curl \curl\)-free (see~\cite[Example~3.10(e)]{FonsecaMuller99}), where
 \[
\curl\curl  \mu:=\sum_{i=1}^d \partial_{ik} \mu_{i}^j+\partial_{ij} \mu_{i}^k-\partial_{jk} \mu_{i}^i-\partial_{ii} \mu_{j}^k, \qquad j,k=1,\ldots,d\,.
\] 
In this case,
\[
\ker (\curl \curl) (\xi)=\setb{ a\odot \xi }{ a\in \R^d, \xi \in \R^d },\qquad \xi \in \R^d,
\]
where $a\odot \xi := (a \otimes \xi + \xi \otimes a)/2$. Hence, \(\Lambda^{d-1}_{\curl\curl }=\{0\}\).   Corollary~\ref{cor:aac} and Theorem~\ref{t:rect} yield that $|Eu| \ll \Hcal^{d-1}$ and
\[
Eu\res\{\theta_{d-1}^*(|Eu|)>0\}= a(x)\odot n_R(x) \, \HH^{d-1}_x\res R,
\]
for some \((d-1)\)-rectifiable set \(R \subset U\) and $ n_R(x)$ is orthogonal to $T_xR$ at $\Hcal^{d-1}$ almost every $x$. This comprises the dimensional estimates and rectifiability of $\BD$-functions from~\cite{Kohn79,AmbrosioCosciaDalMaso97} (see in particular~\cite[Proposition~3.5]{AmbrosioCosciaDalMaso97}).

\subsection{Rectifiability of  varifolds and  defect measures} Let $U \subset \R^d$ be an open set and let us  assume that \(\boldsymbol{\mu} \in \mathcal M(U;\R^d\otimes \R^d)\) is a matrix-valued measure satisfying 
\[
\Div \boldsymbol \mu=\sigma \in \mathcal M(U;\R^d),
\]
where ``\(\Div\)'' is the row-wise divergence.

\begin{proposition}\label{prop:div}
Let \(\boldsymbol{\mu} \in \mathcal M(U;\R^d\otimes \R^d)\) be as above. Assume that for \(|\boldsymbol{\mu}|\)-almost every \(x \in U\),
\[
\rank \biggl( \frac{\d \boldsymbol{\mu}}{\d |\boldsymbol{\mu}|}(x)\biggr)\ge \ell.
\]
Then, \(|\boldsymbol{\mu}|\ll\Ical^\ell\ll \HH^{\ell}\) and there exists an $\ell$-rectifiable set $R\subset U$ and a $\HH^\ell$-measurable map $\lambda \colon R \to \R^d \otimes \R^d$ satisfying
\[
\rank \lambda(x) = \ell \quad \text{$\HH^\ell$-almost everywhere},
\]
such that
\[
\mu\res \{\theta^{*}_{\ell}(|\boldsymbol{\mu}|)>0\} = \lambda(x) \, \HH^\ell_x \res R.
\] 
\end{proposition}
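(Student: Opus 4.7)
The plan is to reduce the statement to an application of Theorems~\ref{t:unsatisfactory} and~\ref{t:rect} by passing to an augmented measure. Define $\nu := (\boldsymbol{\mu}, \sigma) \in \Mcal(U; \R^{d\times d} \times \R^d)$ and the first-order constant-coefficient linear operator
\[
\tilde{\Acal}(v, s) := \Div v - s,
\]
so that $\tilde{\Acal} \nu = 0$ by the definition of $\sigma$. Its principal symbol is $\tilde{\bA}^1(\xi)(v, s) = v\xi$, and by the elementary dimension formula for subspace intersections, $(v, s) \in \Lambda^\ell_{\tilde{\Acal}}$ if and only if $\ker v$ meets every $\ell$-plane of $\R^d$ nontrivially, i.e., if and only if $\rank v \leq \ell - 1$. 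Hence $\Lambda^\ell_{\tilde{\Acal}} = \setb{(v, s)}{\rank v \leq \ell - 1}$.

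First I would apply Theorem~\ref{t:unsatisfactory} to $\nu$ with the operator $\tilde{\Acal}$: for any Borel $E \subset U$ with $\I^\ell(E) = 0$, writing the polar as $(P_1, P_2) := \d\nu/\d|\nu|$, one has $\rank P_1(x) \leq \ell - 1$ for $|\nu|$-a.e.\ $x \in E$. Since $P_1 = \d\boldsymbol{\mu}/\d|\nu|$ and $\d\boldsymbol{\mu}/\d|\boldsymbol{\mu}| = P_1/|P_1|$ on $\{P_1 \neq 0\}$, the rank hypothesis forces $\rank P_1 \geq \ell$ for $|\boldsymbol{\mu}|$-a.e.\ $x$, which in view of $|\boldsymbol{\mu}| \ll |\nu|$ is incompatible with $\rank P_1 \leq \ell - 1$ on any $|\boldsymbol{\mu}|$-positive subset of $E$. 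Consequently $|\boldsymbol{\mu}|(E) = 0$, proving $|\boldsymbol{\mu}| \ll \I^\ell \ll \HH^\ell$.

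The rectifiable structure is then obtained as in the proof of Theorem~\ref{t:rect}. Since $|\boldsymbol{\mu}|(\{\theta^*_\ell(|\boldsymbol{\mu}|) = +\infty\}) = 0$ by Mattila's density theorem, decomposing $\{\theta^*_\ell(|\boldsymbol{\mu}|) > 0\} = R \cup S$ with $R$ rectifiable and $S$ purely unrectifiable, the Besicovitch--Federer theorem gives $\I^\ell(S) = 0$, and the previous step yields $|\boldsymbol{\mu}|(S) = 0$. The Radon--Nikodym theorem then produces $\boldsymbol{\mu}\res R = \lambda(x)\,\HH^\ell\res R$ with an $\HH^\ell$-measurable map $\lambda$, and the lower bound $\rank \lambda(x) \geq \ell$ follows from the identity $\lambda(x) = \theta^*_\ell(|\boldsymbol{\mu}|)(x)\,\d\boldsymbol{\mu}/\d|\boldsymbol{\mu}|(x)$ at $\HH^\ell$-a.e.\ $x \in R$ combined with the hypothesis.

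The matching upper bound $\rank \lambda(x) \leq \ell$ follows from a blow-up argument in the spirit of Lemma~\ref{lm:rec}. At $\HH^\ell$-a.e.\ $x_0 \in R$ the rescaled measures $\boldsymbol{\mu}_r := r^{-\ell}(T^{x_0, r})_\# \boldsymbol{\mu}$ converge weakly-$*$ to $\lambda(x_0)\HH^\ell \res T_{x_0}R$ as $r \todown 0$, while for their divergences one has $|\Div \boldsymbol{\mu}_r|(B_\rho) = r^{1-\ell}|\sigma|(B_{r\rho}(x_0))$. Decomposing $\sigma = g|\boldsymbol{\mu}| + \sigma^\perp$ with $\sigma^\perp \perp |\boldsymbol{\mu}|$, Besicovitch differentiation together with the bound $|\boldsymbol{\mu}|(B_r(x_0)) = O(r^\ell)$ (from $\theta^*_\ell(|\boldsymbol{\mu}|)(x_0) < +\infty$) yields $|\sigma|(B_r(x_0)) = O(r^\ell)$ at $|\boldsymbol{\mu}|$-a.e.\ $x_0$, so $|\Div \boldsymbol{\mu}_r|(B_\rho) = O(r) \to 0$ for every fixed $\rho$. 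Hence the blow-up limit is divergence-free, and taking Fourier transforms as in the proof of Lemma~\ref{lm:rec} shows $\lambda(x_0)\xi = 0$ for every $\xi \in (T_{x_0}R)^\perp$, which forces $\rank \lambda(x_0) \leq \ell$. The main technical point is the decay estimate on $|\sigma|(B_r(x_0))$; apart from that, the argument is a direct reduction to the material of Section~\ref{sec:proofs}.
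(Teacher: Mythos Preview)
Your proof is correct and follows essentially the same approach as the paper: augment to $\nu=(\boldsymbol{\mu},\sigma)$ with the non-homogeneous operator $\tilde{\Acal}(v,s)=\Div v - s$, compute $\Lambda^\ell_{\tilde{\Acal}}=\{(v,s):\rank v\le \ell-1\}$, apply Theorem~\ref{t:unsatisfactory} to get $|\boldsymbol{\mu}|\ll\I^\ell$, and then run the Besicovitch--Federer/density argument of Theorem~\ref{t:rect}. The only difference is in the last step: the paper simply invokes Lemma~\ref{lm:rec} (applied to $\nu$) for the upper bound $\rank\lambda\le\ell$, whereas you spell out the blow-up for $\boldsymbol{\mu}$ directly and supply the decay estimate $|\sigma|(B_r(x_0))=O(r^\ell)$ via Besicovitch differentiation --- this is in fact a point the paper glosses over, since Lemma~\ref{lm:rec} as stated requires $|\nu|\res R\ll \HH^\ell\res R$, which is not immediate for the $\sigma$-component.
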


\begin{proof}
Let \(\widetilde{\boldsymbol{\mu}}:=(\boldsymbol{\mu},\sigma)\in \mathcal M(U;(\R^d\otimes \R^d)\times \R^d) \) and let us define the (non-homogeneous) operator \(\A\) via
\[
\A \widetilde{\boldsymbol{\mu}} :=\Div \boldsymbol{\mu}-\sigma,
\]
so that  \(\ker\bA^1(\xi)= \ker (\Div) (\xi)\times \R^d\). Since 
\[
 \ker (\Div) (\xi)=\setb{M \in \R^d\otimes \R^d}{ \xi \in \ker M}, 
\]
we see that
\[
\begin{split}
\Lambda_{\A}^{\ell}&=\bigcap_{\pi \in \Gr(\ell, d)}\setb{M \in \R^d\otimes \R^d}{ \ker M\cap \pi \ne \{0\} }\times \R^d
\\
&= \setb{ M \in \R^d\otimes \R^d }{{\rm dim} \ker M >d-\ell}\times \R^d.
\end{split}
\]  
Since $|\boldsymbol{\mu}|\ll |\widetilde{\boldsymbol{\mu}}|$, for \(|\boldsymbol{\mu}|\)-almost every $x$ there exists a scalar \(\tau(x)\ne 0\) such that 
\[
\frac{\d \boldsymbol{\mu}}{\d |\boldsymbol{\mu}|}(x)=\tau(x)\frac{\d \boldsymbol{\mu}}{\d |\widetilde{\boldsymbol{\mu}}|}(x),
\]
and hence by Theorem~\ref{t:unsatisfactory},
\[
\text{$\Ical^\ell(B)=0$ for $B$ Borel } \quad \Longrightarrow \quad \rank\bigg(\frac{\d \boldsymbol{\mu}}{\d |\boldsymbol{\mu}|}(x)\bigg) < \ell \quad \text{for $|\boldsymbol{\mu}|$-a.e. $x \in B$}.
\]
In particular, by the assumption on the lower bound of the rank, we deduce that $|\boldsymbol{\mu}|\ll \Ical^\ell\ll\HH^\ell$ and that there exists a rectifiable set \(R\) such that 
\[
\abs{\boldsymbol{\mu}} \res \{\theta^*_\ell(|\boldsymbol{\mu}|)> 0 \} =  \HH^\ell \res R.
\]
The last part of the theorem then easily follows from Lemma \ref{lm:rec}.
\end{proof}

The above proposition allows, for instance,  to  reprove the results of~\cite{Allard72} and to slightly improve the one in~\cite{De-PhilippisDe-RosaGhiraldin18}. To see this, recall that an \(\ell\)-dimensional  varifold can be seen as a measure \(V\) on \(\R^d\times \Gr(\ell,d)\) and that the condition of having bounded first variation with respect to an integrand \(F\) can be written as 
\[
\Div \big(A_F (V_{x}) \|V\|\big)\in \mathcal M(\R^d;\R^d),
\]
where \(\|V\|\) is the projection of \(V\) on \(\R^d\) (the first factor),  \(V(\d x,\d T)=V_{x}(\d T)\otimes \|V\|(\d x)\) is the disintegration of \(V\) with respect to this projection,
\[
A_F (V_{x}):=\int_{\Gr(\ell,d)} B_F(x,T) \dd V_{x}(T) \quad\in \R^d\times \R^d,
\]
and  \(B_F \colon \R^d\times \Gr(\ell,d)\to \R^d\otimes \R^d\) is a matrix-valued map that  depends on the specific integrand \(F\), see the introduction of~\cite{De-PhilippisDe-RosaGhiraldin18} for details. 

The (AC)-condition in~\cite[Definition 1.1]{De-PhilippisDe-RosaGhiraldin18}  exactly implies that the assumptions of Proposition~\ref{prop:div} are satisfied. We remark that in fact Proposition~\ref{prop:div} allows to slightly improve~\cite[Theorem~1.2]{De-PhilippisDe-RosaGhiraldin18}  in the following respects:
\begin{itemize}
\item[(a)] One obtains that \(V\res\{\theta^{*}_{\ell}>0\}\) is rectifiable while in~\cite{De-PhilippisDe-RosaGhiraldin18} only the rectifiability of \(V\res\{\theta_{*, \ell}>0\}\) is shown (here, $\theta_{*,\ell}$ is the lower $\ell$-dimensional Hausdorff density map). 
\item[(b)] If one only wants to get the rectifiability of the measure  \(\|V\|\res\{\theta^{*}_{\ell}>0\}\), then condition~(i) in~\cite[Definition 1.1]{De-PhilippisDe-RosaGhiraldin18} is enough. This allows, in the case \(\ell=d-1\), to work with convex but not necessarily strictly convex integrands. 
\end{itemize}

By similar arguments one recovers the results of \textsc{Ambrosio \& Soner}~\cite{AmbrosioSoner97}, and of \textsc{Lin}~\cite{Lin99} and \textsc{Moser}~\cite{Moser03}; we omit the details.



\end{document}